\newtheorem{theorem}{Theorem}[section] 
\newtheorem{definition}[theorem]{Definition} 
\newtheorem{lemma}{Lemma}
\newcommand{\ii}{{\bf{i}}}
\newcommand{\jj}{{\bf{j}}}
\newcommand{\kk}{{\bf{k}}}
\journal{Journal of \LaTeX\ Templates}
\begin{document}

\begin{frontmatter}



    \title{Improved two-block coordinate descent method for Pose Graph Optimization Problem under $F^*$-norm\tnoteref{mytitlenote}}
    \tnotetext[mytitlenote]{The author Liping Zhang  is supported by the National Natural Science Foundation of China (Grant No. 12171271).}


    \author{Yongjun Chen}
    \author{Liping Zhang\corref{mycorrespondingauthor}}
    \cortext[mycorrespondingauthor]{Corresponding author. \emph{Email address}:lipingzhang@mail.tsinghua.edu.cn}
    \address{Department of Mathematical Sciences, Tsinghua University, Beijing 100084, China}

\begin{abstract}
Dual quaternions and dual quaternion matrices are widely used in robotics research, particularly in simultaneous localization and mapping (SLAM) problem. Using dual quaternion theory and graph-based methods, SLAM can be reformulated as a rank‑one dual quaternion Hermitian matrix completion problem, known as the pose graph optimization (PGO) problem. Recently, Qi and Cui introduced a two‑block coordinate descent method to solve this reformulated problem. In this paper, we enhance this method by reformulating the PGO problem under the more appropriate and robust $F^*$-norm rather than the conventional Frobenius norm, leading to improved experimental accuracy. We show that under the $F^*$-norm, one block has a closed-form solution and another is the optimal rank‑one approximation of dual quaternion Hermitian matrices under the $F^*$-norm. We derive an explicit solution for this 
approximation and present an efficient algorithm to compute it. To further enhance the two-block coordinate descent method, we introduce proper parameter selection, stagnation-based termination criteria and an effective spectral initialization strategy. Extensive numerical experiments demonstrate that our refinements deliver superior accuracy, faster computation, and higher success rates, particularly in low-observation settings. In particular, using the $F^*$‑norm outperforms the traditional $F$‑norm, underscoring its ability to more faithfully capture the magnitude of the dual parts of dual quaternion matrices.
\end{abstract}



\begin{keyword}
dual quaternion Hermitian matrix, optimal rank-k approximation, pose graph optimization problem, two-block coordinate descent method, $F^*$‑norm 
\MSC[2022] 15A18 \sep 15A23 \sep 15B33 \sep 65K10
\end{keyword}

\end{frontmatter}


\renewcommand{\arraystretch}{0.75}
\setlength\tabcolsep{3pt}
\captionsetup[table]{skip=8pt}

\section{Introduction}
Dual quaternions and dual quaternion matrices play an important role in robotics research and offer powerful tools to solve challenges such as the hand-eye calibration problem \cite{Daniilidis1999,hec6,hec1,hec5,hec3}, the simultaneous localization and mapping problem (SLAM) \cite{qc1, c1, Cadena2016, Grisetti2010}, and the kinematic modeling problem \cite{Qi2023}. Dual quaternion Hermitian matrices, a crucial subclass, exhibit remarkable properties, which are instrumental in analyzing the stability of multi-agent formation control systems \cite{Qi2023} and solving pose graph optimization (PGO) problems \cite{Cadena2016, cui2024power, Grisetti2010}.  

The simultaneous localization and mapping is a foundational technology in robotics and computer vision. It involves the concurrent estimation of a mobile robot or agent's position and orientation (localization) within an unknown environment while incrementally constructing a map of that environment. SLAM has been a prominent research focus in mobile robotics over the past two decades. An intuitive way to conceptualize SLAM is through a graph-based formulation \cite{lu1997globally}, where nodes represent the poses of the robot at various times, and edges encode constraints derived from environmental observations or robot movements \cite{Grisetti2010}. Under this transformation, the SLAM problem is also called the pose graph optimization (PGO) problem. In this framework, the map is computed by determining the most consistent spatial configuration of the nodes with respect to the edge-modeled measurements. Specifically, given a directed graph $G=(V,E)$, each vertex $i\in V$ corresponds to a robot pose $\hat{q}_i\in \hat{\mathbb U}$ and each directed edge $(i, j)\in E$ corresponds to a relative measurement $\hat{q}_{ij}\in \hat{\mathbb U}$. $\hat{\mathbb{U}}$ is denoted as the set of unit dual quaternions. The PGO problem then seeks to find the optimal $\hat{q}_i$, for all $i$, that minimizes the discrepancy between the relative measurements $\hat{q}_{ij}$ and the corresponding poses $\hat{q}_{i}$ and $\hat{q}_{j}$. A piece of work \cite{kummerle2011g} reformulates SLAM as a nonlinear least squares problem and solves it using the Gauss-Newton method. However, due to the non-convex nature of the problem, the Gauss-Newton method can become trapped in local minima, making a good initial estimate crucial \cite{carlone2015initialization}. For improved global convergence, \cite{fan2020majorization} introduces a majorization-minimization approach, which guarantees convergence to first-order critical points under mild conditions, providing a robust solution for distributed PGO. 

Recently, Qi and Cui \cite{cui2024power} reformulated the pose graph optimization problem as a rank-one dual quaternion completion task and proposed a two-block coordinate descent method to solve it. This method demonstrated promising numerical results.
Let $\hat{\mathbf{x}}=[\hat{q}_1,\ldots,\hat{q}_n]^*$ and $\hat{\mathbf{Q}}=(\hat{q}_{ij})$, than the pose graph optimization problem can be established as
\begin{equation}
\min_{\hat{\mathbf{x}}\in\hat{\mathbb{U}}^{n\times1}}\|P_E(\hat{\mathbf{Q}}-\hat{\mathbf{x}}\hat{\mathbf{x}}^*)\|_F^2.
\end{equation}
where, let $\hat{\mathbf{A}}=(\hat{a}_{ij})\in\hat{\mathbb{Q}}^{n\times n}$ and $P_E(\hat{\mathbf{A}})=(\hat{p}_{ij})$, then $\hat{p}_{ij}=\hat{a}_{ij}$ for $(i, j)\in E$, and $\hat{p}_{ij}=\hat{0}$, otherwise. \cite{cui2024power} then reformulate it as an equivalent problem:
\begin{equation}
\begin{aligned}
\min_{\hat{\mathbf{X}}_1,\hat{\mathbf{X}}_2}&
\frac12\|P_E(\hat{\mathbf{Q}}-\hat{\mathbf{X}}_1)\|_F^2 \\
\mathrm{s.t.}&~\hat{\mathbf{X}}_1\in C_1,\hat{\mathbf{X}}_2\in C_2,\hat{\mathbf{X}}_1=\hat{\mathbf{X}}_2,
\end{aligned}
\end{equation}
where $C_1=\{\hat{\mathbf{X}}\in\hat{\mathbb{U}}^{n\times n}|\hat{\mathbf{X}}=\hat{\mathbf{X}}^*,\hat{x}_{ii}=\hat{1}\}$,
$C_2=\{\hat{\mathbf{X}}\in\hat{\mathbb{Q}}^{n\times n}|\text{rank}(\hat{\mathbf{X}})=1\}$.

Applying the quadratic penalty approach, it can be reformulated as:
\begin{equation}
\begin{aligned}
\min_{\hat{\mathbf{X}}_1,\hat{\mathbf{X}}_2}&\frac12\|P_E(\hat{\mathbf{X}}_1-\hat{\mathbf{Q}})\|_F^2+\frac\rho2\|\hat{\mathbf{X}}_1-\hat{\mathbf{X}}_2\|_F^2,
\\\mathrm{s.t.}&~\hat{\mathbf{X}}_1\in C_1,\hat{\mathbf{X}}_2\in C_2.
\end{aligned}
\end{equation}
Then, a two-block coordinate descent method is proposed to tackle this reformulated problem, where $\hat{\mathbf{X}}_1^{(k)}$ and $\hat{\mathbf{X}}_2^{(k)}$ are obtained by solving the following subproblems in the $k$-th iteration, alternatively:
\begin{align}
\hat{\mathbf{X}}_1^{(k)}&=\underset{\hat{\mathbf{X}}_1\in C_1}{\operatorname*{argmin}}
\frac12\|P_E(\hat{\mathbf{X}}_1-\hat{\mathbf{Q}})\|_F^2+\frac{\rho^{(k-1)}}2\|\hat{\mathbf{X}}_1-\hat{\mathbf{X}}_2^{(k-1)}\|_F^2,\label{q1}\\
\hat{\mathbf{X}}_2^{(k)}&=\underset{\hat{\mathbf{X}}_2\in C_2}{\operatorname{argmin}} 
\frac{\rho^{(k-1)}}2\|\hat{\mathbf{X}}_2
-\hat{\mathbf{X}}_1^{(k)}\|_F^2.\label{q2}
\end{align}
Subproblem \eqref{q1} has an explicit solution
\begin{equation}\label{e42}
\hat{x}_{1,ij}^{(k)}=\begin{cases}
\hat{1} ,&\text{if}~~i=j,\\
P_{\hat{\mathrm{U}}}\left(\frac{\delta_{E,ij}
\hat{q}_{ij}+\delta_{E,ji}\hat{q}^*_{ji}+\rho^{(k-1)}(
\hat{x}_{2,ij}^{(k-1)}+(\hat{x}_{2,ji}^{(k-1)})^*)
}{\delta_{E,ij}
+\delta_{E,ji}+2\rho^{(k-1)}}\right),&\text{if}~~i\ne j,
\end{cases}
\end{equation}
where, $\delta_{E,ij}=1$ for $(i,j)\in E$ and $\delta_{E,ij}=0$, otherwise, $P_{\hat{\mathrm{U}}}(\cdot)$ is the projection onto the set of unit dual quaternions.
Subproblem \eqref{q2} admits an explicit solution
\begin{equation}\label{e44}
\hat{\mathbf{X}}_2^{(k)}=\hat{\lambda}\hat{\mathbf{u}}\hat{\mathbf{u}}^*,
\end{equation}
where $\hat{\lambda}$ and $\hat{\mathbf u}\in \hat{\mathbb U}^n_2$ are the dominate eigenpair of $\hat{\mathbf{X}}_1^{(k)}$. The detailed algorithm is presented in Algorithm \ref{PGOP0}.
\begin{algorithm}
    \caption{A two-block coordinate descent method for solving PGO problem}\label{PGOP0}
    \begin{algorithmic}
    \REQUIRE $\hat{\mathbf Q}$, $(V,E)$, initial iteration matrix $\hat{\mathbf{X}}_1^{(0)}$ and $\hat{\mathbf{X}}_2^{(0)}$, the
    maximum iteration number  $k_{max}$, parameter $\rho^{(0)}$, $\rho_1$, and the tolerance $\beta$.
    \ENSURE $\hat{\mathbf{X}}_1^{(k)}$, $\hat{\mathbf{X}}_2^{(k)}$.
    \FOR{$k=1,2,\cdots,k_{max}$}
    \STATE Update $\hat{\mathbf{X}}_1^{(k)}$ by \eqref{e42}.
    \STATE Update $\hat{\mathbf{X}}_2^{(k)}$ by \eqref{e44}.
    \STATE Update $\rho^{(k)}=\rho^{(k-1)}\times \rho_1$.
    \STATE If $\|\hat{\mathbf{X}}_1^{(k)}-\hat{\mathbf{X}}_2^{(k)}\|_{F^R}\le\beta$, then Stop.
    \ENDFOR
    \end{algorithmic}
\end{algorithm}

This work establishes a connection between dual quaternion matrix theory and the SLAM problem. The two-block coordinate descent method comprises two components: one block admits a closed‑form update, while the other requires solving the optimal rank-1 approximation of dual quaternion Hermitian matrices under the $F$-norm. The $F$-norm \cite{Qi2022} of a dual quaternion matrix $\hat{\mathbf Q}=\tilde{\mathbf Q}_{st}+\tilde{\mathbf Q}_{\mathcal I}\varepsilon$ is defined by
\begin{equation*}
\| \hat{\mathbf Q} \|_{F}=\begin{cases}
\| \tilde{\mathbf Q}_{st} \|_{F}+\frac{sc (tr(\tilde{\mathbf Q}_{st}^{\ast }\tilde{\mathbf Q}_{\mathcal I}))}{\|  \tilde{\mathbf Q}_{st} \|_{F}}\varepsilon, & \text{if} \quad \tilde{\mathbf Q}_{st}\neq \tilde{\mathbf O }, \\
 \| \tilde{\mathbf Q}_{\mathcal I} \|_{F}\varepsilon, & \text{if} \quad \tilde{\mathbf Q}_{st}=\tilde{\mathbf O },
\end{cases}
\end{equation*}
where $sc(\hat{q})=\frac{1}{2}(\hat{q}+\hat{q}^*)$ and $tr(\hat{\mathbf Q})$ denotes the sum of the diagonal elements of $\hat{\mathbf Q}$.  

Since the dual part of the $F$-norm of a dual quaternion matrix couples its standard and dual parts, a small overall $F$‑norm does not guarantee that the entries of the dual part are small. The following example illustrates this phenomenon.
Let
\begin{equation*}
\hat{\mathbf{Q}}=\tilde{\mathbf{Q}}_{st}+\tilde{\mathbf{Q}}_{\mathcal{I}}\varepsilon=
\begin{bmatrix}
10^{-5}   & 0\\
0 & 0
\end{bmatrix}+\begin{bmatrix}
0 & 0\\
0 & 10^{5} 
\end{bmatrix}\varepsilon.
\end{equation*}
Then the $F$-norm of $\hat{\mathbf{Q}}$ is given by $\|\hat{\mathbf{Q}}\|_{F} = 10^{-5}$ and $\|\hat{\mathbf{Q}}\|^2_{F} = 10^{-10}$, while an element $10^{5}$ exists in $\tilde{\mathbf{Q}}_{\mathcal{I}}$. This example demonstrates that the $F$‑norm can fail to reflect the true magnitude of the dual parts of dual quaternion matrices. To overcome this limitation, we instead adopt the $F^*$‑norm, which more faithfully measures the size of the dual part. The $F^*$-norm of a dual quaternion matrix $\hat{\mathbf Q}=\tilde{\mathbf Q}_{st}+\tilde{\mathbf Q}_{\mathcal I}\varepsilon$ is defined by
\begin{equation*}
\| \hat{\mathbf Q} \|_{F^*}=\begin{cases}
\| \tilde{\mathbf Q}_{st} \|_{F}+\frac{\|\tilde{\mathbf Q}_{\mathcal I}\|^2_F}{2\|\tilde{\mathbf Q}_{st} \|_{F}}\varepsilon, & \text{if} \quad \tilde{\mathbf Q}_{st}\neq \tilde{\mathbf O }, \\
 \| \tilde{\mathbf Q}_{\mathcal I} \|_{F}\varepsilon, & \text{if} \quad \tilde{\mathbf Q}_{st}=\tilde{\mathbf O }.
\end{cases}
\end{equation*}
Then the $F^*$-norm of the above example is $10^{-5}+\frac{1}{2}10^{15}\varepsilon$ and $\| \hat{\mathbf Q} \|^2_{F^*}=10^{-10}+10^{10}\varepsilon$, which more faithfully captures the magnitudes of the dual part of a dual quaternion matrix. Therefore, within the framework of the given two-block coordinate descent method, we aim to enhance the accuracy of the solution by formulating the PGO problem under the $F^*$-norm. And solving the subproblem motivates our study of the optimal rank‑$k$ approximations of dual quaternion Hermitian matrices under the $F^*$‑norm.

The dual complex adjoint matrix was introduced in \cite{chen2024dual}, creating a pivotal connection between dual quaternion matrices and their dual complex adjoint matrices regarding the eigenvalue problem. Under this framework, the computation of eigenpairs for dual quaternion matrices is reduced to those for dual complex matrices. Building on this idea, \cite{chen2024Applications} proposed a novel and efficient algorithm (Algorithm \ref{Power_method_algorithm}) to compute the eigenvalues of dual quaternion Hermitian matrices, which can be used to effectively address the subproblem \eqref{q2}. This suggests that dual complex adjoint matrices may also serve as a powerful tool to tackle the optimal rank‑$k$ approximations of dual quaternion Hermitian matrices under the $F^*$-norm.  Since the two‑block coordinate descent method lacks global‑convergence guarantees, a robust initialization is essential to accelerate convergence and avoid poor local optima. In \cite{hadi2023mathrm}, an efficient spectral method to SE(3) synchronization via dual quaternions was proposed. Viewing PGO as a synchronization task, we adopt this spectral method to generate an effective initial iterate.

The main contributions of this paper are summarized in the following:
\begin{itemize} 
\item Explicit solutions for rank‑$k$ approximations. We derive explicit solutions for the optimal rank‑$k$ approximation of dual quaternion Hermitian matrices under both the $F$‑norm and the $F^*$‑norm, and we further present efficient algorithms to solve these approximations.
\item Robust reformulation under the $F^*$‑norm. We reformulate the PGO problem using the more appropriate and robust $F^*$‑norm and solve it via the two-block coordinate descent method, resulting in significantly improved experimental accuracy.  
\item Spectral initialization strategy. We adopt an efficient spectral method to generate the initial estimate, which efficiently reduces the number of iterations needed by the two‑block coordinate descent method.
\item  Effective algorithmic refinements. We further improve both the accuracy and efficiency of the algorithm by incorporating proper parameter selection and stagnation-based termination criteria.
\end{itemize}

The paper is organized as follows. In Section \ref{Preliminaries}, we provide an overview of the fundamental definitions and key results for dual quaternions and dual quaternion matrices. Section \ref{section3} introduces the dual complex adjoint matrix along with its useful properties. In Section \ref{section4}, we derive explicit solutions for the optimal rank‑$k$ approximations of dual quaternion Hermitian matrices under both the $F$‑norm and the $F^*$‑norm. In Section \ref{section5}, we improve the two‑block coordinate descent method by reformulating the PGO problem under the $F^*$‑norm and details a suite of algorithmic enhancements—proper parameter selection, stagnation-based termination criteria, and a spectral initialization strategy—to accelerate convergence. Numerical experiments validating the performance of our method are presented in Section \ref{section6}, and concluding remarks appear in Section \ref{section7}.

\section{Preliminary}\label{Preliminaries}

In this section, some notation and definitions are introduced, which will be used in the sequel.

\subsection{Dual complex numbers and dual quaternions}

Denote $\mathbb{R}$ and $\mathbb{C}$ as the sets of real numbers and complex numbers, respectively. Denote the set of dual complex numbers as
\begin{displaymath}
   \mathbb{DC}=\{{a}=a_{st}+a_{\mathcal I}\varepsilon|a_{st},a_{\mathcal I}\in \mathbb{C}\},
  \end{displaymath}
where $\varepsilon$ is the infinitesimal unit, satisfying $\varepsilon\neq 0$ and $\varepsilon^{2}=0$.  We call $a_{st}$ and $a_{\mathcal I}$ the standard part and the dual part of $a$, respectively. The infinitesimal unit $\varepsilon$ is commutative in multiplication with complex numbers and quaternions. We say that $a$ is {\it appreciable} if $a_{st}\neq 0$. The {\it conjugate} \cite{Qi2022} of $a$ is
${a}^{\ast }=a_{st}^{\ast}+a_{\mathcal I}^{\ast }\varepsilon,$ where $a_{st}^{\ast}$ and $a_{\mathcal I}^{\ast}$ are conjugates of $a_{st}$ and $a_{\mathcal I}$, respectively. The {\it absolute value} \cite{Qi2022} of ${a}$ is
 \begin{displaymath}\left |{a}\right |=\begin{cases}
\left | a_{st}\right |+{\rm sgn}( a_{st}) a_{\mathcal I}\varepsilon, & {\rm if} \quad a_{st}\neq 0, \\
\left | a_{\mathcal I} \right |\varepsilon, & \text{\rm otherwise.}
\end{cases}
\end{displaymath}
If $a_{st},a_{\mathcal I}\in \mathbb{R}$, then ${a}$ is called a {\it dual number}. Denote the set of dual numbers as
\begin{displaymath}
   \mathbb{D}=\{{a}=a_{st}+a_{\mathcal I}\varepsilon|a_{st},a_{\mathcal I}\in \mathbb{R}\}.
  \end{displaymath}
The order operation for dual numbers was defined in \cite{cui2024power}. For two dual numbers ${a}=a_{st}+a_{\mathcal I}\varepsilon$ and ${b}=b_{st}+b_{\mathcal I}\varepsilon$, we say that ${a} > {b}$ if
$$\text{$a_{st} > b_{st}$ ~~~{\rm or}~~~ $a_{st}=b_{st}$ {\rm and} $a_{\mathcal I} > b_{\mathcal I}$}.$$
When $b_{st}\ne 0$ or $a_{st}=b_{st}=0$ and $b_{\mathcal I}\ne 0$, we can define the division operation \cite{Qi2022} of dual numbers as
$$\frac{{a}}{{b}}=\begin{cases}
\dfrac{ a_{st}}{b_{st}}+\left (\dfrac{ a_{\mathcal I}}{b_{st}}-\dfrac{ a_{st}b_{\mathcal I}}{b_{st}b_{st}} \right )\varepsilon, & {\rm if} \quad b_{st}\neq 0, \\
\dfrac{ a_{\mathcal I}}{b_{\mathcal I}}+c\varepsilon, & {\rm if} \quad a_{st}=b_{st}=0, ~b_{\mathcal I}\ne 0
\end{cases}
$$
where $c$ is an arbitrary complex number.

For two dual complex numbers ${a}=a_{st}+a_{\mathcal I}\varepsilon, {b}=b_{st}+b_{\mathcal I}\varepsilon\in \mathbb{DC}$, the addition and multiplication of $a$ and $b$ are defined as 
  \begin{displaymath}
  \begin{aligned}
  &{a}+{b}={b}+{a}=\left (a_{st}+b_{st} \right )+\left (a_{\mathcal I}+b_{\mathcal I} \right )\varepsilon,\\
&{a}{b}={b}{a}=a_{st}b_{st} +\left (a_{st}b_{\mathcal I}+a_{\mathcal I}b_{st} \right )\varepsilon.
\end{aligned}
\end{displaymath}

Denote the set of quaternions as
\begin{displaymath}
 \mathbb{Q}=\{\tilde{q}=q_{0}+q_{1} \ii+q_{2} \jj+q_{3} \kk|~q_{0},q_{1},q_{2},q_{3}\in\mathbb{R}\},
  \end{displaymath}
 where $\ii, \jj, \kk$ are three imaginary units of quaternions, satisfying
 \begin{displaymath}
  \begin{aligned}
  &\ii\jj=-\jj\ii=\kk,\quad \jj\kk=-\kk\jj=\ii,\quad \kk\ii=-\ii\kk=\jj,\\
  &\ii^2=\jj^2= \kk^2=-1.
 \end{aligned}
\end{displaymath}
A quaternion $\tilde{q}\in\mathbb{Q}$ can also be represented as $\tilde{q}=\left [q_{0}, q_{1},q_{2},q_{3}\right ]$, which is a real four-dimensional vector. We can rewrite $\tilde{q}=[ q_{0},\vec{q} ]$, where $\vec{q}$ is a real three-dimensional vector \cite{Cheng2016,Daniilidis1999,Wei2013}. The conjugate of $\tilde{p}$ is $\tilde{p}^{\ast}=\left [p_{0}, -\vec{p}\right ]$ and the magnitude of $\tilde{p}$ is
$\left |\tilde{p}\right|=\sqrt{p_{0}^{2}+\left \| \vec{p} \right \|_2^2}$.

The zero element in $\mathbb{Q}$ is $\tilde{0}=\left [0,0,0,0\right ]$ and the unit element is $\tilde{1}=\left [1,0,0,0\right ]$. For any two quaternions $\tilde{p}=\left [p_{0}, \vec{p}\right ]$ and $\tilde{q}=\left [q_{0}, \vec{q}\right ]$, the addition and multiplicity of $\tilde{q}$ and $\tilde{p}$ are defined as
 \begin{displaymath}
\tilde{p}+\tilde{q}=\tilde{q}+\tilde{p}=\left [p_{0}+q_{0},\vec{p}+\vec{q}\right ],\quad
\tilde{p}\tilde{q}=\left [p_{0}q_{0}-\vec{p}\cdot \vec{q}, p_{0}\vec{q}+q_{0}\vec{p}+\vec{p}\times \vec{q}\right ].
\end{displaymath}
The multiplication of quaternions satisfies the distributive law but is noncommutative. If $\tilde{q}\tilde{p}=\tilde{p}\tilde{q}=\tilde{1}$, then the quaternion $\tilde{p}$ is said to be invertible and its inverse is $\tilde{p}^{-1}=\tilde{q}$.

Denote the set of unit quaternions as $\mathbb{U}=\{\tilde{p}\in \mathbb{Q}|~|\tilde{p} |=1\}$. For any two unit quaternions $\tilde{p},\tilde{q}\in \mathbb{U}$, we have $\tilde{p}\tilde{q}\in \mathbb{U}$ and $\tilde{p}^{\ast }\tilde{p}=\tilde{p}\tilde{p}^{\ast }=\tilde{1}$, i.e., $\tilde{p}$ is invertible and $\tilde{p}^{-1}=\tilde{p}^{\ast }$. Generally, if a quaternion $\tilde{p}\ne \tilde{0}$, then we have  $\frac{\tilde{p}^{\ast }}{|\tilde{p}|^2}\tilde{p}=\tilde{p}\frac{\tilde{p}^{\ast }}{|\tilde{p}|^2}=\tilde{1}$, and hence $\tilde{p}^{-1}=\frac{\tilde{p}^{\ast }}{|\tilde{p}|^2}$.

Denote the set of dual quaternions as
 \begin{displaymath}
 \hat{\mathbb{Q}}=\{\hat{p}=\tilde{p}_{st}+\tilde{p}_{\mathcal I}\varepsilon|~\tilde{p}_{st}, \tilde{p}_{\mathcal I} \in \mathbb{Q}\}.
\end{displaymath}
We call $\tilde{p}_{st}$ and $\tilde{p}_{\mathcal I}$ the standard part and dual part of $\hat{p}$, respectively. If $\tilde{p}_{st}\neq \tilde{0}$, then we say that $\hat{p}$ is {\it appreciable}.
The conjugate of $\hat{p}$ is
$\hat{p}^{\ast }=\tilde{p}_{st}^{\ast}+\tilde{p}_{\mathcal I}^{\ast }\varepsilon$ and the magnitude \cite{Qi2022} of $\hat{p}$ is
\begin{equation*}
\left | \hat{p}\right |=\begin{cases}
\left | \tilde{p}_{st}\right |+\frac{{\rm sc}(\tilde{p}_{st}^{\ast }\tilde{p}_{\mathcal I})}{\left |\tilde{p}_{st} \right |}\varepsilon, & \text{if} \quad \tilde{p}_{st}\neq \tilde{0}, \\
\left | \tilde{p}_{\mathcal I} \right |\varepsilon, & \text{if} \quad \tilde{p}_{st}= \tilde{0},
\end{cases}
\end{equation*}
where ${\rm sc}(\tilde{p})=\frac{1}{2}(\tilde{p}+\tilde{p}^{\ast})$.
We say that $\hat{p}$ is a {\it unit dual quaternion} if $\left |\hat{p} \right |=1$.

The zero element in $\hat{\mathbb{Q}}$ is $\hat{0}=\tilde{0}+\tilde{0}\varepsilon$ and the unit element is $\hat{1}=\tilde{1}+\tilde{0}\varepsilon$. For two dual quaternions $\hat{p}=\tilde{p}_{st}+\tilde{p}_{\mathcal I}\varepsilon, \hat{q}=\tilde{q}_{st}+\tilde{q}_{\mathcal I}\varepsilon$, addition and multiplicity of $\hat{p}$ and $\hat{q}$ are
\begin{equation*}
\hat{p}+\hat{q}=\hat{q}+\hat{p}=\left (\tilde{p}_{st}+\tilde{q}_{st} \right )+\left (\tilde{p}_{\mathcal I}+\tilde{q}_{\mathcal I} \right )\varepsilon,\qquad
\hat{p}\hat{q}=\tilde{p}_{st}\tilde{q}_{st} +\left (\tilde{p}_{st}\tilde{q}_{\mathcal I}+\tilde{p}_{\mathcal I}\tilde{q}_{st} \right )\varepsilon.
\end{equation*}
If $\tilde{q}_{st}\ne \tilde{0}$ or  $\tilde{p}_{st}=\tilde{q}_{st}=\tilde{0}$ and $\tilde{q}_{\mathcal I}\ne \tilde{0}$, the division \cite{cui2024power} of $\hat{p}$ and $\hat{q}$ is defined as
\begin{equation*}
\frac{ \tilde{p}_{st}+ \tilde{p}_{\mathcal I}\varepsilon}{\tilde{q}_{st}+ \tilde{q}_{\mathcal I}\varepsilon}=\begin{cases}
\dfrac{ \tilde{p}_{st}}{\tilde{q}_{st}}+\left (\dfrac{ \tilde{p}_{\mathcal I}}{\tilde{q}_{st}}-\dfrac{ \tilde{p}_{st}\tilde{q}_{\mathcal I}}{\tilde{q}_{st}\tilde{q}_{st}} \right )\varepsilon, & \text{if} \quad \tilde{q}_{st}\neq \tilde{0}, \\
\dfrac{ \tilde{p}_{\mathcal I}}{\tilde{q}_{\mathcal I}}+\tilde{c}\varepsilon, & \text{if} \quad \tilde{p}_{st}=\tilde{q}_{st}=\tilde{0},~\tilde{q}_{\mathcal I}\ne \tilde{0}
\end{cases}
\end{equation*}
where $\tilde{c}\in \mathbb{Q}$ is an arbitrary quaternion.  If $\hat{q}\hat{p}=\hat{p}\hat{q}=\hat{1}$, then $\hat{p}$ is called invertible and the inverse of $\hat{p}$ is $\hat{p}^{-1}=\hat{q}$.

Denote the set of unit dual quaternions as $\hat{\mathbb{U}}=\{\hat{p}\in\hat{\mathbb{Q}}|~|\hat{p}|=1\}$. Let $\hat{p}\in \hat{\mathbb{U}}$, then $\hat{p}^{\ast }\hat{p}=\hat{p}\hat{p}^{\ast }=\hat{1}$. Hence, $\hat{p}$ is invertible and $\hat{p}^{-1}=\hat{p}^{\ast}$. Generally, if $\hat{p}=\tilde{p}_{st}+\tilde{p}_{\mathcal I}\varepsilon\in\hat{\mathbb{Q}}$ is appreciable, then $\hat{p}^{-1}=\tilde{p}^{-1}_{st}-\tilde{p}^{-1}_{st}\tilde{p}_{\mathcal I}\tilde{p}^{-1}_{st}\varepsilon$.

A unit dual quaternion $\hat{u}$ is called the projection of $\hat{q}\in \hat{\mathbb Q}$ onto the set $\hat{\mathbb{U}}$, if 
\begin{equation*}
\hat{u}\in \underset{\hat{v}\in\hat{\mathbb{U}}}{\arg\min} ~
|\hat{v}-\hat{q}|^2.
\end{equation*}
A feasible solution was given in \cite{cui2024power}. Denote $\hat{q}=\tilde{q}_{st}+\tilde{q}_{\mathcal I}\varepsilon$. If $\tilde{q}_{st}\neq \tilde{0}$, let 
\begin{equation*}
\hat{u}=\frac{\hat{q}}{\left | \hat{q}\right |}=\frac{\tilde{q}_{st}}{\left | \tilde{q}_{st}\right |}+\left ( \frac{\tilde{q}_{\mathcal I}}{\left | \tilde{q}_{st}\right |}-\frac{\tilde{q}_{st}}{\left | \tilde{q}_{st}\right |}sc\left( \frac{\tilde{q}_{st}^{\ast }}{\left | \tilde{q}_{st}\right |}\frac{\tilde{q}_{\mathcal I}}{\left | \tilde{q}_{st}\right |}\right)\right )\varepsilon.
\end{equation*}
If $\tilde{q}_{st}=\tilde{0}$ and $\tilde{q}_{\mathcal I}\neq \tilde{0}$, let
$\hat{u}=\frac{\tilde{q}_{\mathcal{I}}}{\left | \tilde{q}_{\mathcal{I}}\right |}+\tilde{c}\varepsilon \in \hat{\mathbb{U}},$
where $\tilde{c}$ is an arbitrary quaternion, satisfying $sc(\tilde{q}_{\mathcal{I}}^{\ast }\tilde{c})=\tilde{0}$.
Then $\hat{u}$ is the projection of $\hat{q}$ onto the set $\hat{\mathbb{U}}$.

\subsection{Dual quaternion matrix}

The sets of $n\times m$-dimensional dual number matrices, dual complex matrices, quaternion matrices, and dual quaternion matrices are denoted as $\mathbb{D}^{n\times m}$,
$\mathbb{DC}^{n\times m}$, $\mathbb{Q}^{n\times m}$, and $\hat{\mathbb{Q}}^{n\times m}$, respectively.
Their zero elements are ${O}^{n\times m}$, $\hat{O}^{n\times m}$, $\tilde{\mathbf{O}}^{n\times m}$ and  $\hat{\mathbf{O}}^{n\times m}$.
Denote $\hat{I}_n$, $\tilde{\mathbf{I}}_n$ and $\hat{\mathbf{I}}_n$ as the unit element of $\mathbb{DC}^{n\times n}$, $\mathbb{Q}^{n\times n}$, and $\hat{\mathbb{Q}}^{n\times n}$, respectively. If every element of a dual quaternion matrix is a unit dual quaternion, then we refer to it as 
a unit dual quaternion matrix. The set of $n\times m$-dimensional unit dual quaternion matrices is denoted as $\hat{\mathbb{U}}^{n\times m}$.

A dual complex matrix $\hat{P}=P_{st}+P_{\mathcal I}\varepsilon \in \mathbb{DC}^{n\times m}$ has standard part $P_{st}\in \mathbb C^{n\times m}$ and dual part $P_{\mathcal I}\in \mathbb C^{n\times m}$. The transpose and the conjugate of $\hat{P}=(\hat{p}_{ij})$ are $\hat{P}^{T}=(\hat{p}_{ji})$ and $\hat{P}^{\ast}=(\hat{p}_{ji}^{\ast })$, respectively. If $\hat{P}^{\ast}=\hat{P}$, then $\hat{P}$ is a dual complex Hermitian matrix. A dual complex matrix $\hat{U}\in \mathbb{DC}^{n\times n}$ is a unitary matrix, if $\hat{U}^\ast\hat{U}=\hat{U}\hat{U}^\ast=\hat{I}_n$.

The 2-norm of dual complex vector $\hat{\mathbf{\mathbf x}}=\mathbf x_{st}+\mathbf x_{\mathcal I}\varepsilon=(\hat{x}_{i})\in \mathbb{DC}^{n\times 1}$ is defined as
\begin{equation*}
\left \| \hat{\mathbf x }\right \|_{2}=\begin{cases}
\sqrt{\hat{\mathbf x }^*\hat{\mathbf x }}, & \text{if} \quad \mathbf x_{st}\neq O, \\
\|\mathbf{x}_{\mathcal{I}}\|_2\varepsilon, & \text{if} \quad \mathbf x_{st}=O.
\end{cases}  
\end{equation*}

Suppose that $\hat{P}=P_1+P_2\varepsilon \in \mathbb{DC}^{n\times m}$,
the $F$-norm and the $F^*$-norm of a dual complex matrix $\hat{P}$ are defined by
\begin{equation*}
\| \hat{P}\|_{F}=\begin{cases}
\left \| P_1\right \|_{F}+\frac{sc(tr (P_1^{\ast }P_2))}{\left \| P_1 \right \|_{F}}\varepsilon, & \text{if} \quad P_1\neq O, \\
\left \| P_2\right \|_{F}\varepsilon, & \text{if} \quad P_1=O,
\end{cases} 
\end{equation*}
where $tr(P)$ is the sum of the diagonal elements of $P$, and
\begin{equation*}
\| \hat{P}\|_{F^*}=\begin{cases}
\| P_1\|_{F}+\frac{\| P_2\|^2_{F}}{2\| P_1\|_{F}}\varepsilon, & \text{if} \quad P_1\neq O, \\
\| P_2\|_{F}\varepsilon, & \text{if} \quad P_1=O.
\end{cases} 
\end{equation*}

The definition of the rank-k decomposition of a dual complex matrix is defined in \cite{wang2023dual}.
\begin{definition} \label{de2.13}
Let $\hat{A}=A_1+A_2\varepsilon\in \mathbb{DC}^{m\times n}$, $\hat{B}=B_1+B_2\varepsilon\in \mathbb{DC}^{m\times k}$ and $\hat{C}=C_1+C_2\varepsilon\in \mathbb{DC}^{n\times k}$. If the complex matrices $B_1$ and $C_1$ are full rank, and $\hat{A}=\hat{B}\hat{C}^*$, then $\hat{B}\hat{C}^*$ is called the rank-k decomposition of $\hat{A}$.
If a dual complex matrix $\hat{A}$ has a rank-k decomposition, we say that $\hat{A}$ is rank-k.
\end{definition}

The following two lemmas give the optimal rank-k approximation of dual complex matrices under the $F$-norm and $F^*$-norm \cite{wei2024singular}.

\begin{lemma}\label{lemma2F}
Let $\hat{A}=A_1+A_2\varepsilon\in \mathbb{DC}^{m\times n}$, $\hat{U}=U_1+U_2\varepsilon\in \mathbb{DC}^{m\times k}$, $\hat{V}=V_1+V_2\varepsilon\in \mathbb{DC}^{m\times k}$, $\hat{\Sigma}= \Sigma_1+\Sigma_2\varepsilon\in \mathbb{DC}^{k\times k}$. If
$U_1\Sigma_1V_1^*$ is the best rank-k approximation of $A_1$ under the Frobnius norm, then $\hat{U},\hat{V},\hat{\Sigma}$ is the optimal solution to the optimization problem
\begin{align*}\min_{\hat{U},\hat{V},\hat{\Sigma}}
&~ \| A-\hat{U}\hat{\Sigma}\hat{V}^* \|_F^2,
\\\text {s.t.}
~&\hat{U}^*\hat{U}=\hat{V}^*\hat{V}=\hat{I}_k, (\hat{\Sigma})_{ij}=\hat{0}~(\forall i\neq j),(\hat{\Sigma})_{ii}>\hat{0}~(\forall i=1,\cdots,k).
\end{align*}
\end{lemma}

\begin{lemma}\label{lemma2F*}
Let $\hat{A}=A_1+A_2\varepsilon\in \mathbb{DC}^{m\times n}$, $\hat{U}=U_1+U_2\varepsilon\in \mathbb{DC}^{m\times k}$, $\hat{V}=V_1+V_2\varepsilon\in \mathbb{DC}^{m\times k}$, $\hat{\Sigma}= \Sigma_1+\Sigma_2\varepsilon\in \mathbb{DC}^{k\times k}$. Let $U_1\Sigma_1V_1^*$ be the best rank-k approximation of $A_1$ under the Frobnius norm. Suppose that $\Sigma_1=\mathrm{diag}(\sigma_1 I_{r_1},\sigma_2I_{r_2},\cdots,\sigma_\iota I_{r_\iota})$, satisfies $\sigma_1>\sigma_2>\cdots>\sigma_\iota$. Let
\begin{align*}
&\quad U_2=U_1\left(\mathrm{sym}(R\Sigma_1)\odot\Delta+\Omega+\Psi\right)+(I_m-U_1U_1^*)A_2V_1\Sigma_1^{-1},
\\&\quad V_2=V_1\left (\mathrm{sym}(\Sigma_1R)\odot\Delta+ \Omega\right )+(I_n-V_1V_1^*)A_2^*U_1\Sigma_1^{-1},
\\&\quad \Sigma_2=\frac12\mathrm{diag}(\mathrm{sym}(R)),\end{align*}
where $R=U_1^*A_2V_1$, denote $R=(R_{ij})$, $R_{ij}\in \mathbb{C}^{r_i\times r_i}$, $\odot$ is the Hadamard product of matrices, $\mathrm{sym}(A)=A+A^*$, denote $\Omega=\mathrm{diag}(\Omega_{11},
\Omega_{22},\cdots,\Omega_{\iota \iota })$, 
$\Omega_{ii}\in\mathbb{C}^{r_i\times r_i}$, 
$\Omega_{ii}=-\Omega_{ii}^*,i=1,2,\cdots,\iota$, 
denote $\Delta=(\Delta_{ij})$, $\Delta_{ij}\in \mathbb{C}^{r_i\times r_i}$, $\Delta_{ii}=O^{r_i\times r_i}$, $\Delta_{ij}=\frac{1_{r_i\times r_j}}{\sigma_j^2-\sigma_i^2}$, for $i\ne j$,
denote $\Psi=(\Psi_{ij})$, $\Psi_{ij}\in \mathbb{C}^{r_i\times r_i}$, $\Psi_{ii}=\frac{R_{ii}-R_{ii}^*}{2\sigma_1}$, $\Psi_{ij}=O^{r_i\times r_j}$, for $i\ne j$,
then $\hat{U},\hat{V},\hat{\Sigma}$ is the optimal solution to the optimization problem
\begin{align*}\min_{\hat{U},\hat{V},\hat{\Sigma}}
&~ \| A-\hat{U}\hat{\Sigma}\hat{V}^*\|_{F^*}^2,
\\\text {s.t.} 
~&\hat{U}^*\hat{U}=\hat{V}^*\hat{V}=\hat{I}_k, (\hat{\Sigma})_{ij}=\hat{0}~(\forall i\neq j),(\hat{\Sigma})_{ii}>\hat{0}~(\forall i=1,\cdots,k).
\end{align*}
Specifically, $(\hat{U}\hat{\Sigma}\hat{V}^*)_{\mathcal{I}}=A_2-(I_m-U_1U_1^*)A_2(I_n-V_1V_1^*).$
\end{lemma}

A quaternion matrix $\tilde{\mathbf Q}\in \mathbb Q^{m\times n}$ can be
expressed as $\tilde{\mathbf Q}= Q_1+ Q_2 \ii+ Q_3
\jj+ Q_4 \kk$ with $ Q_1, Q_2, Q_3, Q_4\in \mathbb{R}^{m\times n}$.
Let $P_1= Q_1+Q_2 \ii$ and $P_2= Q_3+ Q_4 \ii$, then $\tilde{\mathbf Q}$ can be rewritten as
$\tilde{\mathbf Q}= P_1+ P_2 \jj$.
The $F$-norm of a quaternion matrix $\tilde{\mathbf{Q}}=(\tilde{q}_{ij}) \in \tilde{\mathbb{Q}}^{m\times n}$ is defined as
$\| \tilde{\mathbf{Q}}\| _F=\sqrt{\sum_{ij}^{} |\tilde{q}_{ij}|^2}$.
The magnitude of a quaternion vector $\tilde{\mathbf{x}}=(\tilde{x}_{i}) \in \tilde{\mathbb{Q}}^{n\times 1}$ is defined as
$\left \| \tilde{\mathbf{x}} \right \|=\sqrt{\sum_{i=1}^{n}|\tilde{x}_{i}|^2}.$

A dual quaternion matrix $\hat{\mathbf Q}=\tilde{\mathbf Q}_{st}+\tilde{\mathbf Q}_{\mathcal I}\varepsilon \in \hat{\mathbb{Q}}^{m\times n}$ has standard part  $\tilde{\mathbf Q}_{st}\in \mathbb Q^{m\times n}$ and dual part $\tilde{\mathbf Q}_{\mathcal I}\in \mathbb Q^{m\times n}$. If $\tilde{\mathbf Q}_{st}\neq \tilde{\mathbf O}$, then $\hat{\mathbf Q}$ is called appreciable. The transpose and conjugate of $\hat{\mathbf{Q}}=(\hat{q}_{ij})$ are
$\hat{\mathbf{Q}}^{T}=(\hat{q}_{ji})$ and $\hat{\mathbf{Q}}^{\ast}=(\hat{q}_{ji}^{\ast })$, respectively. 
Denote the set of unitary dual quaternion matrix with dimension $n$ as
$$
\hat{\mathbb{U}}^n_2=\{\hat{\mathbf{U}}\in \hat{\mathbb{Q}}^{n\times n}|~\hat{\mathbf{U}}^\ast\hat{\mathbf{U}}=\hat{\mathbf{U}}\hat{\mathbf{U}}^\ast=\hat{\mathbf{I}}_n\}.
$$
Denote the set of dual quaternion Hermitian matrix with dimension $n$ as
$$
\hat{\mathbb{H}}^n=\{\hat{\mathbf{Q}}\in \hat{\mathbb{Q}}^{n\times n} |~\hat{\mathbf{Q}}^{\ast}=\hat{\mathbf{Q}}\}.
$$

The $2$-norm of a dual quaternion vector  $\hat{\mathbf{x}}=\tilde{\mathbf{x}}_{st}+\tilde{\mathbf{x}}_{\mathcal{I}}\varepsilon \in \hat{\mathbb{Q}}^{n\times 1}$ is defined as
\begin{equation*}
\| \hat{\mathbf x}\|_{2}=\begin{cases}
\sqrt{\hat{\mathbf x}^*\hat{\mathbf x}}, & \text{if} \quad \tilde{\mathbf x }_{st}\neq \tilde{\mathbf O }, \\
\|\tilde{\mathbf{x}}_{\mathcal{I}}\|\varepsilon, & \text{if} \quad \tilde{\mathbf x }_{st}=\tilde{\mathbf O },
\end{cases}
\end{equation*}
and denote $\hat{\mathbb{Q}}_2^{n\times 1}=\{\hat{\mathbf{x}}\in \hat{\mathbb{Q}}^{n\times 1}|~\| \hat{\mathbf x}\|_{2}=1\}$.
The $F^{R}$-norm of a dual quaternion matrix $\hat{\mathbf Q}=\tilde{\mathbf Q}_{st}+\tilde{\mathbf Q}_{\mathcal I}\varepsilon$ is defined by
\begin{equation*}
 \| \hat{\mathbf Q} \|_{F^{R}}=\sqrt{ \| \tilde{\mathbf Q}_{st} \|_{F}^{2}+\| \tilde{\mathbf Q}_{\mathcal I}\|_{F}^{2}}.
\end{equation*}
For details of these definitions, see \cite{cui2024power,Qi2022}.

A unit quaternion vector $\hat{\mathbf u}$ is called the projection of a dual quaternion vector $\hat{\mathbf x}$ onto the set of dual quaternion vector with unit $2$-norm, i.e.,$\hat{\mathbb{Q}}_2^{n\times 1}$, if  
\begin{equation}\label{def11}
\hat{\mathbf{u}}\in\underset{\hat{\mathbf{v}}\in\hat{\mathbb{Q}}_2^{n\times1}}{\operatorname*{\arg\min}}~\|\hat{\mathbf{v}}-\hat{\mathbf{x}}\|_2^2.
\end{equation}
A feasible solution was given in \cite{cui2024power}.
If $\tilde{\mathbf{x}}_{st}\neq \tilde{\mathbf{O}}^{n\times 1}$, denote $$\hat{\mathbf u}=\frac{\hat{\mathbf x}}{\left \| \hat{\mathbf x}\right \|_2}=\frac{\tilde{\mathbf x}_{st}}{\left \| \tilde{\mathbf x}_{st}\right \|}+\left ( \frac{\tilde{\mathbf x}_{\mathcal I}}{\left \| \tilde{\mathbf x}_{st}\right \|}-\frac{\tilde{\mathbf x}_{st}}{\left \| \tilde{\mathbf x}_{st}\right \|}sc\left( \frac{\tilde{\mathbf x}_{st}^{\ast }}{\left \| \tilde{\mathbf x}_{st}\right \|}\frac{\tilde{\mathbf x}_{\mathcal I}}{\left \| \tilde{\mathbf x}_{st}\right \|}\right) \right )\varepsilon.
$$ If $\tilde{\mathbf x}_{st}= \tilde{\mathbf{O}}^{n\times 1}$ and $\tilde{\mathbf x}_{\mathcal I}\neq \tilde{\mathbf{O}}^{n\times 1}$, denote 
$$\hat{\mathbf u}=\frac{\tilde{\mathbf x}_{\mathcal{I}}}{\left \| \tilde{\mathbf x}_{\mathcal{I}}\right \|}+\tilde{\mathbf c}\varepsilon \in \hat{\mathbb{Q}}_2^{n\times1},$$ where $\tilde{\mathbf c}$ is arbitrary queternion vector satisfying
$sc(\tilde{\mathbf x}_{\mathcal{I}}^{\ast }\tilde{\mathbf c})=\tilde{0}$.
Then $\hat{\mathbf u}$ is the projection of $\hat{\mathbf x}$ onto the set 
$\hat{\mathbb{Q}}_2^{n\times 1}$.

The following definition of eigenvalue and eigenvector of dual quaternion matrices was given in \cite{Li2023}.
\begin{definition}\label{def:righteig}
Let $\hat{\mathbf Q}\in \hat{\mathbb{Q}}^{n\times n}$.
If there exist $\hat{\lambda} \in \hat{\mathbb{Q}}$ and $\hat{\mathbf x}\in \hat{\mathbb{Q}}^{n\times 1}$ such that $\hat{\mathbf x}$ is appreciable and
$\hat{\mathbf Q}\hat{\mathbf x}=\hat{\mathbf x}\hat{\lambda},$ then $\hat{\lambda}$ is called a {\bf right eigenvalue} of $\hat{\mathbf Q}$ with $\hat{\mathbf x}$ as the corresponding {\bf right eigenvector}.
If there exist $\hat{\lambda} \in \hat{\mathbb{Q}}$ and $\hat{\mathbf x}\in \hat{\mathbb{Q}}^{n\times 1}$, where $\hat{\mathbf x}$ is appreciable, such that
$\hat{\mathbf Q}\hat{\mathbf x}=\hat{\lambda}\hat{\mathbf x},$ then we call $\hat{\lambda}$ is a {\bf left eigenvalue} of $\hat{\mathbf Q}$ with $\hat{\mathbf x}$ as an associated {\bf left eigenvector}.
\end{definition}

Let $\hat{\lambda}$ be a right eigenvalue of $\hat{\mathbf Q}$ with $\hat{\mathbf x}$ as the corresponding right eigenvector. Since a dual number is commutative with a dual quaternion vector, then if $\hat{\lambda}$ is a dual number, it is also a left eigenvalue of $\hat{\mathbf{Q}}$. In this case, we simply call $\hat{\lambda}$ an {\bf eigenvalue} of $\hat{\mathbf{Q}}$ with $\hat{\mathbf{x}}$ as an associated {\bf eigenvector}.

\section{Dual Complex Adjoint Matrix}\label{section3}
In this section, we introduce the dual complex adjoint matrix with some useful properties, and a refined power method for computing the eigenpairs of dual quaternion Hermitian matrices.

First, we introduce the {\it dual complex adjoint matrix} of a dual quaternion matrix $\hat{\mathbf Q}=(A_1+A_2 \jj)+( A_3+ A_4 \jj)\varepsilon$ as introduced in \cite{chen2024dual}, via the mapping
\begin{equation*}\label{JJ}
\mathcal{J}(\hat{\mathbf Q})=\begin{bmatrix}
 A_1 & A_2 \\
-\overline{A_2}   & \overline{ A_1}
\end{bmatrix}
+\begin{bmatrix}
 A_3 &  A_4 \\
-\overline{ A_4}   & \overline{A_3}
\end{bmatrix} \varepsilon.
\end{equation*}
Here $\mathcal{J}$ defines a bijection from the set  $\mathcal{J}:\hat{\mathbb{Q}}^{m\times n}\to DM(\mathbb{C}^{m\times n})$, where
\begin{equation*}
DM(\mathbb{C}^{m\times n})=
\left.\left\{\mathcal{J}(\hat{\mathbf Q})=\begin{bmatrix}
A_1 &  A_2 \\
-\overline{ A_2}   & \overline{ A_1}
\end{bmatrix}
+\begin{bmatrix}
 A_3 & A_4 \\
-\overline{A_4}   & \overline{ A_3}
\end{bmatrix} \varepsilon \right| A_1, A_2, A_3, A_4\in\mathbb{C}^{m\times n}\right\}
\end{equation*}
is the collection of all dual complex adjoint matrices of size $2m\times2n$.  In particular, $DM(\mathbb{C}^{m \times n})$ is a subset of $\mathbb {DC}^{2m \times 2n}$.

The following lemma lists some useful properties about the bijection $\mathcal{J}$, see \cite{chen2024dual}.

\begin{lemma}\label{lemma3.2}
Let $\hat{\mathbf P},\hat{\mathbf P}_1\in \hat{\mathbb{Q}}^{m\times k}$, $\hat{\mathbf Q} \in \hat{\mathbb{Q}}^{k\times n}$, $\hat{\mathbf R}\in \hat{\mathbb{Q}}^{n\times n}$, then
\begin{itemize}
\item[{\rm (i)}]  $\mathcal{J}(\hat{\mathbf O}^{m\times n})=\hat{O}^{2m\times 2n},\mathcal{J}(\hat{\mathbf I}_n)=\hat{I}_{2n}$.
\item[{\rm (ii)}]  $\mathcal{J}(\hat{\mathbf P}\hat{\mathbf Q})=\mathcal{J}(\hat{\mathbf P})\mathcal{J}(\hat{\mathbf Q})$.
\item[{\rm (iii)}]  $\mathcal{J}(\hat{\mathbf P}+\hat{\mathbf P}_1)=\mathcal{J}(\hat{\mathbf P})+\mathcal{J}(\hat{\mathbf P}_1)$.
\item[{\rm (iv)}]  $\mathcal{J}(\hat{\mathbf P}^*)=\mathcal{J}(\hat{\mathbf P})^*$.
\item[{\rm (v)}]  $\mathcal{J}(\hat{\mathbf R})$ is unitary (Hermitian) if and only if $\hat{\mathbf R}$ is unitary (Hermitian).
\item[{\rm (vi)}]  $\mathcal{J}$ is an isomorphism from ring $(\hat{\mathbb{Q}}^{n\times n},+,\cdot)$ to ring $(DM(\mathbb{C}^{n\times n}),+,\cdot)$.
\end{itemize}
\end{lemma}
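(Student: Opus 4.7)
The plan is to verify the six properties one by one, with most of the work concentrated on the multiplicativity claim (ii). Properties (i) and (iii) are immediate from the block-matrix formula defining $\mathcal{J}$: the zero dual quaternion matrix and the identity $\hat{\mathbf{I}}_n$ have the required block shapes, and the map is manifestly linear in the four complex coefficient matrices $A_1, A_2, A_3, A_4$, so additivity reduces to entrywise complex addition.

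For (ii), I would first establish the purely quaternion case. Writing $\tilde{\mathbf{P}} = P_1 + P_2 j$ and $\tilde{\mathbf{Q}} = Q_1 + Q_2 j$ with complex coefficient matrices and using the commutation rules $j c = \bar{c} j$ and $j^2 = -1$ for $c \in \mathbb{C}$, the product expands to $\tilde{\mathbf{P}}\tilde{\mathbf{Q}} = (P_1 Q_1 - P_2\overline{Q_2}) + (P_1 Q_2 + P_2\overline{Q_1}) j$. A direct $2\times 2$ block multiplication of $J(\tilde{\mathbf{P}})$ with $J(\tilde{\mathbf{Q}})$ then yields precisely these four complex blocks (with signs and conjugates matched against the $(2,1)$ and $(2,2)$ entries of $J(\tilde{\mathbf{P}}\tilde{\mathbf{Q}})$). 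Extending to the dual setting is then painless: decomposing $\hat{\mathbf{P}} = \tilde{\mathbf{P}}_{st} + \tilde{\mathbf{P}}_\mathcal{I}\varepsilon$, $\hat{\mathbf{Q}} = \tilde{\mathbf{Q}}_{st} + \tilde{\mathbf{Q}}_\mathcal{I}\varepsilon$, the relation $\varepsilon^2 = 0$ makes both $\hat{\mathbf{P}}\hat{\mathbf{Q}}$ and $\mathcal{J}(\hat{\mathbf{P}})\mathcal{J}(\hat{\mathbf{Q}})$ split cleanly into standard and dual parts, and each part reduces to an instance of the quaternion identity just proved together with the additivity from (iii).

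Property (iv) is handled analogously. Using the entry-level rule $\overline{c_1 + c_2 j} = \bar{c}_1 - c_2 j$, one gets $\tilde{\mathbf{A}}^* = A_1^* - A_2^T j$ for $\tilde{\mathbf{A}} = A_1 + A_2 j$; plugging this into $\mathcal{J}$ produces the same block matrix as the conjugate transpose of $\mathcal{J}(\tilde{\mathbf{A}})$, namely the one with upper row $(A_1^*, -A_2^T)$ and lower row $(A_2^*, A_1^T)$. The dual extension is automatic because conjugation acts coefficient-wise and $\varepsilon$ commutes with the coefficient matrices. With (i)–(iv) in hand, (v) is routine: apply $\mathcal{J}$ to $\hat{\mathbf{R}}^*\hat{\mathbf{R}} = \hat{\mathbf{I}}$ (respectively $\hat{\mathbf{R}}^* = \hat{\mathbf{R}}$) to get the forward direction, and use injectivity of $\mathcal{J}$ for the converse, which is evident since $A_1, A_2, A_3, A_4$ are uniquely recoverable from the top block row of $\mathcal{J}(\hat{\mathbf{R}})$. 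Combining additivity, multiplicativity, identity preservation, and the bijectivity already encoded in the definition then yields the ring isomorphism (vi).

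The main obstacle is purely bookkeeping in step (ii): the cross-term $P_2 j \cdot Q_2 j = -P_2 \overline{Q_2}$ and the mixed term $P_2 j \cdot Q_1 = P_2 \overline{Q_1} j$ must line up precisely with the $(1,1)$ and $(1,2)$ entries of the block product, and the signs and conjugations in the $(2,1)$ block of the adjoint of the product must agree with those that emerge from the block multiplication. Conceptually there is nothing mysterious—one may view $\mathcal{J}(\hat{\mathbf{A}})$ as the matrix of left-multiplication by $\hat{\mathbf{A}}$ acting on $\hat{\mathbb{Q}}^n \cong \mathbb{DC}^{2n}$, which a priori guarantees ring-homomorphism behavior—but the sign convention in the lower row must be tracked carefully to make the verification watertight.
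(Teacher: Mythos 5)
Your verification is correct, and there is nothing in this paper to compare it against: Lemma \ref{lemma3.2} is stated without proof here, with the authors deferring to their earlier work \cite{a}. Your route is the standard (and surely intended) one: prove the block identities first for the purely quaternion case by expanding $(P_1+P_2j)(Q_1+Q_2j)$ with $jc=\bar{c}j$, $j^2=-1$, and checking that the four blocks of $J(\tilde{\mathbf P})J(\tilde{\mathbf Q})$ coincide with those of $J(\tilde{\mathbf P}\tilde{\mathbf Q})$ (your expansion $(P_1Q_1-P_2\overline{Q_2})+(P_1Q_2+P_2\overline{Q_1})j$ and the sign/conjugation bookkeeping in the lower block row are right), then lift to the dual setting via $\mathcal{J}(\hat{\mathbf P})=J(\tilde{\mathbf P}_{st})+J(\tilde{\mathbf P}_{\mathcal I})\varepsilon$ and $\varepsilon^2=0$, and finally derive (v) and (vi) from (i)--(iv) together with injectivity/surjectivity of $\mathcal{J}$, which is immediate since the top block row determines $A_1,\dots,A_4$ and $DM(\mathbb{C}^{n\times n})$ is by definition the image. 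The only remark worth making is that your conceptual aside (viewing $\mathcal{J}(\hat{\mathbf A})$ as the matrix of left multiplication on $\hat{\mathbb{Q}}^{n}\cong\mathbb{DC}^{2n}$) implicitly uses that left multiplication commutes with the \emph{right} $\mathbb{DC}$-scalar action, i.e.\ the right-module structure; since you do not rely on this but carry out the block computation directly, it is not a gap.
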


For convenience, we introduce the following notation. Define the bijection $\mathcal{F}:\hat{\mathbb{Q}}^{n\times 1}\to \mathbb{DC}^{2n\times 1}$ by
\begin{equation*}
\mathcal{F}(\mathbf v_{1}+\mathbf v_{2} \jj+(\mathbf v_{3}+\mathbf v_{4} \jj))= 
\begin{bmatrix}
    \mathbf v_{1}\\
    -\overline{\mathbf v_{2}} 
    \end{bmatrix}+
    \begin{bmatrix}
    \mathbf v_{3}\\
    -\overline{\mathbf v_{4}} 
    \end{bmatrix}\varepsilon,  
\end{equation*}
where each $\mathbf v_{i}\in \mathbb C^{n\times 1}$. Its inverse is given by
\begin{equation*}
\mathcal{F}^{-1}\left ( \begin{bmatrix}
    \mathbf u_{1}\\
    \mathbf u_{2}
    \end{bmatrix}+
    \begin{bmatrix}
    \mathbf u_{3}\\
    \mathbf u_{4}
    \end{bmatrix}\varepsilon \right )=
    \mathbf u_{1}-\overline{\mathbf u_{2}} \jj+
(\mathbf u_{3}-\overline{\mathbf u_{4}} \jj  ) \varepsilon,
\end{equation*}
where each $\mathbf u_{i}\in \mathbb C^{n\times 1}$.

The next lemma describes how, via $\mathcal{F}$ and $\mathcal{J}$, the eigenvalue problem for a dual quaternion matrix reduces to one for its dual complex adjoint matrix \cite{chen2024Applications}.

\begin{lemma}\label{corollary3.1}
Let $\hat{\mathbf Q}\in\hat{\mathbb{H}}^n$, $\{ \hat{\lambda}_{i}\}^{n}_{i=1}$ and $\{\hat{\mathbf v}^{i}\}^{n}_{i=1}$ be eigenvalues and corresponding eigenvectors of $\hat{\mathbf Q}$, then
$\hat{\lambda}_1,\hat{\lambda}_1,\hat{\lambda}_2,\hat{\lambda}_2,\cdots,\hat{\lambda}_n,\hat{\lambda}_n$ are all the eigenvalues of $\hat{P}=\mathcal{J}(\hat{\mathbf Q})$, in addition, $\mathcal{F}(\mathbf v^{i})$ and $\mathcal{F}(\mathbf v^{i} j)$ are two orthogonal eigenvectors of $\hat{P}$ with respect to eigenvalue $\hat{\lambda}_{i}$.
On the other hand, if $\hat{\mathbf u}^i\in \mathbb{DC}^{2n\times 1}$ is an
eigenvector of $\hat{P}$ with respect to eigenvalue $\hat{\lambda}_{i}$, then $\mathcal{F}^{-1}(\hat{\mathbf u}^i)$ is an eigenvector of $\hat{\mathbf Q}$ with respect to eigenvalue $\hat{\lambda}_{i}$.
\end{lemma}

Based on these properties, an efficient dual complex adjoint matrix based power method (\ref{Power_method_algorithm}) is introduced in \cite{chen2024Applications}.

\begin{algorithm}
    \caption{DCAM-PM} \label{Power_method_algorithm}
    \begin{algorithmic}
    \REQUIRE $\hat{\mathbf Q}\in \hat{\mathbb H}^n$, initial iteration vector $\hat{\mathbf v}^{(0)}\in \hat{\mathbb Q}^{n\times 1}_2$, the maximal iteration number $k_{max}$ and the tolerance $\delta$. 
    \ENSURE $\hat{\mathbf v}^*$ and $\hat{\lambda}^{( k ) }$.
    \STATE Compute $\hat{P}=\mathcal{J}(\hat{\mathbf Q})$ and $\hat{\mathbf u}^{(0)}=\mathcal{F}(\hat{\mathbf v}^{(0)})$.
    \FOR {$k = 1$ to $k_{max}$}   
    \STATE Update $\hat{\mathbf y}^{(k)}=\hat{P}\hat{\mathbf u}^{(k-1)}$, $\hat{\lambda}^{(k)}=(\hat{\mathbf u}^{(k-1)})^*\hat{\mathbf y}^{(k)}$, $\hat{\mathbf u}^{(k)}=\frac{\hat{\mathbf y}^{( k)}}{\| \hat{\mathbf y}^{(k) } \|_2 }$.
    \STATE If $\| \hat{\mathbf y}^{(k) } -\hat{\mathbf u}^{(k-1) }\hat{\lambda}^{( k ) } \|_{2^R}\le \delta $, then Stop.
    \ENDFOR
    \STATE Compute $\hat{\mathbf v}^*=\mathcal{F}^{-1}(\hat{\mathbf u}^{(k)})$.
    \end{algorithmic}
\end{algorithm}

\section{The optimal rank-k approximation of dual quaternion Hermitian matrices}\label{section4}
In this section, leveraging the properties of the dual complex adjoint matrix, we derive explicit solutions for the optimal rank‑k approximation of a dual quaternion Hermitian matrix under both the $F$-norm and $F^*$-norm.

The definition of the rank of a quaternion matrix is defined as follows \cite{Zhang1997}.
\begin{definition}\label{de3.6}
Let $\tilde{\mathbf{A}}=A_1+A_2 \jj\in \mathbb{Q}^{m\times n}$, the rank of $\tilde{\mathbf{A}}$ is $k$ if and only if the rank of the complex adjoint matrix $J(\tilde{\mathbf{A}})$ is $2k$, where 
$J(\tilde{\mathbf A})=\begin{bmatrix}
 A_1 & A_2 \\
-\overline{A_2}   & \overline{ A_1}
\end{bmatrix}.$
\end{definition}

Similar to the definition of the rank-k decomposition of dual quaternion matrix (Definition \ref{de2.13}), we define the rank-k decomposition of a dual quaternion matrix as follows.
\begin{definition}
Let $\hat{\mathbf{A}}=\tilde{\mathbf A}_1+\tilde{\mathbf A}_2\varepsilon\in \hat{\mathbb{Q}}^{m\times n}$, $\hat{\mathbf{B}}=\tilde{\mathbf B}_1+\tilde{\mathbf B}_2\varepsilon\in \hat{\mathbb{Q}}^{m\times k}$, $\hat{\mathbf{C}}=\tilde{\mathbf C}_1+\tilde{\mathbf C}_2\varepsilon\in \hat{\mathbb{Q}}^{n\times k}$, where $m\ge k$ and $n\ge k$. If the quaternion matrices $\tilde{\mathbf B}_1$ and $\tilde{\mathbf C}_1$ are both full rank and $\hat{\mathbf{A}}=\hat{\mathbf{B}}\hat{\mathbf{C}}^*$, then $\hat{\mathbf{B}}\hat{\mathbf{C}}^*$ is called the rank-k decomposition of $\hat{\mathbf{A}}$.
\end{definition}

If a dual quaternion matrix admits a rank-k decomposition, we refer to the rank of this matrix as $k$. 

The following lemma clarifies the correlation between the rank of a dual quaternion matrix and the rank of its dual complex adjoint matrix.
\begin{lemma}
The rank of $\hat{\mathbf{A}}\in \hat{\mathbb{Q}}^{m\times n}$ is $k$ if and only if the rank of $\mathcal{J}(\hat{\mathbf A})$ is $2k$.
\end{lemma}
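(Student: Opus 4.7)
The plan is to prove each implication by translating between rank-$k$ factorizations of $\hat{\mathbf{A}}$ and rank-$2k$ factorizations of $\mathcal{J}(\hat{\mathbf{A}})$, using the homomorphism properties of $\mathcal{J}$ (Lemma~\ref{lemma3.2}) together with the quaternion rank relation (Definition~\ref{de3.6}).

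For the forward direction, I would start from a rank-$k$ factorization $\hat{\mathbf{A}}=\hat{\mathbf{B}}\hat{\mathbf{C}}^{\ast}$ whose standard parts $\tilde{\mathbf{B}}_1,\tilde{\mathbf{C}}_1$ are of quaternion rank $k$. Parts (ii) and (iv) of Lemma~\ref{lemma3.2} yield $\mathcal{J}(\hat{\mathbf{A}})=\mathcal{J}(\hat{\mathbf{B}})\mathcal{J}(\hat{\mathbf{C}})^{\ast}$, whose standard parts $J(\tilde{\mathbf{B}}_1)$ and $J(\tilde{\mathbf{C}}_1)$ have complex rank $2k$ by Definition~\ref{de3.6}. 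Definition~\ref{de2.13} then identifies this as a rank-$2k$ decomposition of $\mathcal{J}(\hat{\mathbf{A}})$.

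For the backward direction, I begin with a rank-$2k$ decomposition $\mathcal{J}(\hat{\mathbf{A}})=\hat{B}\hat{C}^{\ast}$. Matching standard parts, $J(\tilde{\mathbf{A}}_1)=B_1C_1^{\ast}$ has complex rank $2k$, and Definition~\ref{de3.6} delivers a quaternion factorization $\tilde{\mathbf{A}}_1=\tilde{\mathbf{B}}_1\tilde{\mathbf{C}}_1^{\ast}$ with both factors of rank $k$. Because any two full-column-rank factorizations of $J(\tilde{\mathbf{A}}_1)$ differ by an invertible $T\in\mathbb{C}^{2k\times 2k}$, after replacing $(\hat{B},\hat{C})$ by $(\hat{B}T,\hat{C}T^{-\ast})$ I may assume $B_1=J(\tilde{\mathbf{B}}_1)$ and $C_1=J(\tilde{\mathbf{C}}_1)$. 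Matching dual parts then leaves the linear equation
\begin{equation*}
J(\tilde{\mathbf{A}}_2)=J(\tilde{\mathbf{B}}_1)C_2^{\ast}+B_2\,J(\tilde{\mathbf{C}}_1)^{\ast}
\end{equation*}
to be solved for $B_2$ and $C_2$.

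The main obstacle is that this equation yields arbitrary complex $B_2,C_2$, whereas to close the argument I need adjoint-form solutions. My plan is to exploit the intrinsic characterization that a complex matrix $M$ lies in the adjoint image iff $K_p\overline{M}=MK_q$, where $K_p=\begin{bmatrix}O&I_p\\-I_p&O\end{bmatrix}$, and to introduce the involution $\phi(X)=K_p\overline{X}K_q^{-1}$ on each block. A direct computation shows $\phi$ is $\mathbb{R}$-additive, preserves matrix multiplication, commutes with conjugate transposition, and fixes exactly the adjoint matrices. Applying $\phi$ to the displayed equation fixes $J(\tilde{\mathbf{A}}_2)$, $J(\tilde{\mathbf{B}}_1)$ and $J(\tilde{\mathbf{C}}_1)$ while sending $(B_2,C_2)$ to $(\phi(B_2),\phi(C_2))$, so this new pair is again a solution. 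Averaging yields the $\phi$-fixed solution $\bigl(\tfrac{1}{2}(B_2+\phi(B_2)),\tfrac{1}{2}(C_2+\phi(C_2))\bigr)$, which by the fixed-point characterization is of adjoint form $J(\tilde{\mathbf{B}}_2),J(\tilde{\mathbf{C}}_2)$. Injectivity of $J$ then produces quaternion matrices $\tilde{\mathbf{B}}_2,\tilde{\mathbf{C}}_2$ with $\tilde{\mathbf{A}}_2=\tilde{\mathbf{B}}_1\tilde{\mathbf{C}}_2^{\ast}+\tilde{\mathbf{B}}_2\tilde{\mathbf{C}}_1^{\ast}$, so $\hat{\mathbf{A}}=(\tilde{\mathbf{B}}_1+\tilde{\mathbf{B}}_2\varepsilon)(\tilde{\mathbf{C}}_1+\tilde{\mathbf{C}}_2\varepsilon)^{\ast}$ is the desired rank-$k$ decomposition.
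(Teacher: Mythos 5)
Your proof is correct, and its forward direction coincides with the paper's. Where you diverge is the backward direction: the paper's own proof simply chains Definition~\ref{de2.13}, Definition~\ref{de3.6} and parts (ii), (iv) of Lemma~\ref{lemma3.2} with the invertibility of $\mathcal{J}$, concluding that rank~$k$ for $\hat{\mathbf{A}}$ is equivalent to the existence of a decomposition $\mathcal{J}(\hat{\mathbf{A}})=\mathcal{J}(\hat{\mathbf{B}})\mathcal{J}(\hat{\mathbf{C}})^{\ast}$ with full-rank standard parts; it then passes directly to ``rank of $\mathcal{J}(\hat{\mathbf{A}})$ is $2k$'' without addressing that a rank-$2k$ decomposition of $\mathcal{J}(\hat{\mathbf{A}})$ in the sense of Definition~\ref{de2.13} only supplies \emph{arbitrary} dual complex factors, not factors in the adjoint image $DM(\mathbb{C})$. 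You identify exactly this point and close it: normalizing the standard part by an invertible $T$ so that $B_1=J(\tilde{\mathbf{B}}_1)$, $C_1=J(\tilde{\mathbf{C}}_1)$, then symmetrizing the dual-part solution by the involution $\phi(X)=K_p\overline{X}K_q^{-1}$, whose fixed points are precisely the adjoint matrices and which respects sums, products and conjugate transposes, so that the averaged pair $\bigl(\tfrac12(B_2+\phi(B_2)),\tfrac12(C_2+\phi(C_2))\bigr)$ is an adjoint-form solution of the ($\mathbb{R}$-linear) dual-part equation. This makes your argument longer but fully self-contained, whereas the paper's two-line proof is only complete if the reader supplies exactly the step you prove. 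One small imprecision: Definition~\ref{de3.6} only defines quaternion rank via $J$; the existence of a full-rank factorization $\tilde{\mathbf{A}}_1=\tilde{\mathbf{B}}_1\tilde{\mathbf{C}}_1^{\ast}$ over $\mathbb{Q}$ is a standard fact (quaternions form a division ring) that you should cite or state rather than attribute to that definition.
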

\begin{proof}
By Definition \ref{de2.13} and Definition \ref{de3.6}, the rank of $\hat{\mathbf{A}}$ is $k$ if and only if there exist decomposition $\hat{\mathbf{A}}=\hat{\mathbf{B}}\hat{\mathbf{C}}^*$, where $\hat{\mathbf{B}}=\tilde{\mathbf B}_1+\tilde{\mathbf B}_2\varepsilon\in \hat{\mathbb{Q}}^{m\times k}$, $\hat{\mathbf{C}}=\tilde{\mathbf C}_1+\tilde{\mathbf C}_2\varepsilon\in \hat{\mathbb{Q}}^{n\times k}$, and the rank of $J(\tilde{\mathbf{B}}_1)$ and $J(\tilde{\mathbf{C}}_1)$ are $2k$.
It follows from (ii) and (iv) in Lemma \ref{lemma3.2} and the invertibility of mapping $\mathcal{J}$ that the rank of $\hat{\mathbf{A}}$ is $k$ if and only if there exist decomposition $\mathcal{J}(\hat{\mathbf{A}})=\mathcal{J}(\hat{\mathbf{B}})\mathcal{J}(\hat{\mathbf{C}})^*$, and the standard parts of $\mathcal{J}(\hat{\mathbf{B}})$ and $\mathcal{J}(\hat{\mathbf{C}})$ are full rank. Hence the rank of $\hat{\mathbf{A}}$ is $k$ if and only if the rank of $\mathcal{J}(\hat{\mathbf A})$ is $2k$.
\end{proof}

Utilizing the properties of the dual complex adjoint matrix, we derive the explicit solutions for the optimal rank‑k approximation of dual quaternion Hermitian matrices under both the $F$-norm and $F^*$-norm. To this end, we first relate the $F$-norm and $F^*$-norm of a dual quaternion matrix to the corresponding norms of its dual complex adjoint matrix.

\begin{lemma}\label{lemma3.5}
Let $\hat{\mathbf Q}=\tilde{\mathbf Q}_{st} +\tilde{\mathbf Q}_{\mathcal I}\varepsilon\in\hat{ \mathbb{Q}}^{n\times n}$, then it holds
\begin{equation*}
\| \mathcal{J}(\hat{\mathbf Q}) \|_{F}^2=2\| \hat{\mathbf Q}  \|_{F}^2~,~\| \mathcal{J}(\hat{\mathbf Q}) \|_{F^*}^2=2\| \hat{\mathbf Q} \|_{F^*}^2.
\end{equation*}
\end{lemma}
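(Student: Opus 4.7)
The plan is to represent the dual quaternion matrix as $\hat{\mathbf Q}=A_1+A_2j+(A_3+A_4j)\varepsilon$ with $A_1,A_2,A_3,A_4\in\mathbb C^{n\times n}$, so that $\tilde{\mathbf Q}_{st}=A_1+A_2j$, $\tilde{\mathbf Q}_{\mathcal I}=A_3+A_4j$, and the standard and dual parts of $\mathcal J(\hat{\mathbf Q})$ are exactly the block matrices $J(\tilde{\mathbf Q}_{st})$ and $J(\tilde{\mathbf Q}_{\mathcal I})$ prescribed by the definition of $\mathcal J$. A single preliminary observation does much of the work: the block form together with $\|A\|_F=\|\overline A\|_F$ for complex matrices immediately yields the quaternion-matrix identity $\|J(\tilde{\mathbf R})\|_F^2=2\|\tilde{\mathbf R}\|_F^2$ for every $\tilde{\mathbf R}\in\mathbb Q^{n\times n}$. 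Since the $F^*$-norm on both sides is defined as the standard-part squared plus the dual-part squared times $\varepsilon$, applying this observation to both parts of $\hat{\mathbf Q}$ closes the $F^*$-norm identity immediately.

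For the $F$-norm I would first square the definition, obtaining (when $\tilde{\mathbf Q}_{st}\neq\tilde{\mathbf O}$)
\[
\|\hat{\mathbf Q}\|_F^2=\|\tilde{\mathbf Q}_{st}\|_F^2+2\,sc(tr(\tilde{\mathbf Q}_{st}^{\ast}\tilde{\mathbf Q}_{\mathcal I}))\varepsilon,
\]
and the analogous expression for $\|\mathcal J(\hat{\mathbf Q})\|_F^2$. Since the standard-part comparison is already handled, the identity collapses to the cross-term equality
\[
sc\bigl(tr(J(\tilde{\mathbf Q}_{st})^{\ast}J(\tilde{\mathbf Q}_{\mathcal I}))\bigr)=2\,sc\bigl(tr(\tilde{\mathbf Q}_{st}^{\ast}\tilde{\mathbf Q}_{\mathcal I})\bigr).
\]
I would multiply out the $2\times 2$ block product on the left directly, sum its diagonal blocks, and then invoke the elementary complex-matrix identity $tr(A^T\overline B)=\overline{tr(A^{\ast}B)}$ to show that the trace simplifies to the real number $2\operatorname{Re}(tr(A_1^{\ast}A_3)+tr(A_2^{\ast}A_4))$. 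Expanding $\tilde{\mathbf Q}_{st}^{\ast}\tilde{\mathbf Q}_{\mathcal I}$ through the $X+Yj$ multiplication rule for quaternion matrices and extracting the scalar part of its trace yields, via the same identity, $\operatorname{Re}(tr(A_1^{\ast}A_3)+tr(A_2^{\ast}A_4))$, exactly half of the left side. The degenerate case $\tilde{\mathbf Q}_{st}=\tilde{\mathbf O}$ is trivial because both squared $F$-norms reduce to $\|\tilde{\mathbf Q}_{\mathcal I}\|_F^2\varepsilon^2=0$ (and similarly for $\mathcal J(\hat{\mathbf Q})$), so the identity holds vacuously.

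The main bookkeeping hazard is that the quaternion conjugate transpose does not act blockwise as a complex Hermitian transpose on the $X+Yj$ decomposition: the $Y$ block is transposed but not entrywise conjugated. Consequently the products $J(\tilde{\mathbf Q}_{st})^{\ast}J(\tilde{\mathbf Q}_{\mathcal I})$ and $\tilde{\mathbf Q}_{st}^{\ast}\tilde{\mathbf Q}_{\mathcal I}$ produce traces that look superficially different, and the identity $tr(A^T\overline B)=\overline{tr(A^{\ast}B)}$ must be applied on both sides in order to reveal the common real expression and to expose the factor of $2$ that separates them.
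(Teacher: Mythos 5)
Your proposal is correct and follows essentially the same route as the paper: both arguments rest on the complex block decomposition $\hat{\mathbf Q}=A_1+A_2j+(A_3+A_4j)\varepsilon$, the block form of $\mathcal J(\hat{\mathbf Q})$, and conjugation-invariance of Frobenius norms and traces to extract the factor $2$, with your cross-term identity $sc\bigl(tr(J(\tilde{\mathbf Q}_{st})^{\ast}J(\tilde{\mathbf Q}_{\mathcal I}))\bigr)=2\,sc\bigl(tr(\tilde{\mathbf Q}_{st}^{\ast}\tilde{\mathbf Q}_{\mathcal I})\bigr)$ checking out exactly as claimed. The only difference is bookkeeping: the paper bundles the standard and dual parts into dual complex matrices $\hat Q_1,\hat Q_2$ and uses $\|\hat{\mathbf Q}\|_F^2=\|\hat Q_1\|_F^2+\|\hat Q_2\|_F^2$ together with blockwise additivity, whereas you compare the standard and dual components of the squared $F$-norm separately, which amounts to the same computation.
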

\begin{proof}
When $\tilde{\mathbf Q}_{st}\ne \tilde{\mathbf O}$,
suppose that $\tilde{\mathbf Q}_{st}=Q_{st,1}+Q_{st,2} j$ and $\tilde{\mathbf Q}_{\mathcal I}=Q_{\mathcal I,1}+Q_{\mathcal I,2}j$.
Denote $\hat{Q}_1=Q_{st,1}+Q_{\mathcal I,1}\varepsilon$ and $\hat{Q}_2=Q_{st,2}+Q_{\mathcal I,2}\varepsilon$. 
We have
\begin{align*} 
\| \hat{\mathbf Q} \|_{F}^2&=\| \hat{Q}_1+\hat{Q}_2 j \|_{F}^2
=\operatorname{tr}((\hat{Q}^*_1-\hat{Q}^T_2 j)(\hat{Q}_1+\hat{Q}_2 j))
\\&=\operatorname{tr}(\hat{Q}^*_1\hat{Q}_1+\hat{Q}^*_1\hat{Q}_2 j-\hat{Q}^T_2\overline{\hat{Q}_1} j+\hat{Q}^T_2\overline{\hat{Q}_2})
\\&=\operatorname{tr}(\hat{Q}^*_1\hat{Q}_1)+tr(\hat{Q}^T_2\overline{\hat{Q}_2})
=\| \hat{Q}_1 \|_{F}^2+\| \hat{Q}_2 \|_{F}^2.
\end{align*}
Therefore,
\begin{align*} 
\| \mathcal{J}(\hat{\mathbf Q})  \|_{F}^2&=\left  \| \begin{bmatrix}
\hat{Q}_1  &\hat{Q}_2\\
-\overline{\hat{Q}_2}   & \overline{\hat{Q}_1}
\end{bmatrix} \right \|_{F}^2 
\\&=\| \hat{Q}_1 \|_{F}^2+\| \hat{Q}_2 \|_{F}^2+
\| \overline{\hat{Q}_1}  \|_{F}^2+ \| \overline{\hat{Q}_2}  \|_{F}^2
\\&=2\| \hat{Q}_1 \|_{F}^2+2 \| \hat{Q}_2 \|_{F}^2
=2 \| \hat{\mathbf Q} \|_{F}^2.
\end{align*}
Furthermore,
\begin{align*} 
\| \mathcal{J}(\hat{\mathbf Q})\|_{F^*}^2&=\left  \| \begin{bmatrix}
 Q_{st,1}  & Q_{st,2}\\
-\overline{Q_{st,2}}   & \overline{Q_{st,1 }}
\end{bmatrix}
+\begin{bmatrix}
Q_{\mathcal I,1}  & Q_{\mathcal I,2}\\
-\overline{Q_{\mathcal I,2}}   & \overline{Q_{\mathcal I,1 }}
\end{bmatrix}\varepsilon \right \|_{F^*}^2 
\\&=\left  \| \begin{bmatrix}
Q_{st,1}  &  Q_{st,2}\\
-\overline{Q_{st,2}}   & \overline{Q_{st,1 }}
\end{bmatrix} \right \|_{F}^2 + \left 
\|\begin{bmatrix}
Q_{\mathcal I,1}  & Q_{\mathcal I,2}\\
-\overline{Q_{\mathcal I,2}}   & \overline{Q_{\mathcal I,1 }}
\end{bmatrix} \right \|_{F}^2 \varepsilon 
\\&=2( \| \tilde{Q}_{st} \|_{F}^2+\| \tilde{ Q}_{\mathcal I} \|_{F}^2
\varepsilon ) 
=2\| \hat{\mathbf Q}\|_{F^*}^2.
\end{align*}

When $\tilde{\mathbf Q}_{st}= \tilde{\mathbf O}$, we have
$\| \mathcal{J}(\hat{\mathbf Q}) \|_{F}^2=2\| \hat{\mathbf Q}  \|_{F}^2=\| \mathcal{J}(\hat{\mathbf Q}) \|_{F^*}^2=2\| \hat{\mathbf Q} \|_{F^*}^2=\hat{0}.$
\end{proof}

Using Lemma \ref{lemma2F} and Lemma \ref{lemma3.5}, we obtain the optimal rank-k approximation of a dual quaternion Hermitian matrix under $F$-norm.

\begin{theorem}\label{theorem3.5}
Let $\hat{\mathbf Q}=\tilde{\mathbf Q}_{st} +\tilde{\mathbf Q}_{\mathcal I}\varepsilon\in\hat{\mathbb{H}}^n$, $\{\hat{\lambda}_{i}\}_{i=1}^{n}$ are eigenvalues of $\hat{\mathbf Q}$, satisfying $|\hat{\lambda}_{1}|\ge  |\hat{\lambda}_{2}|\ge \cdots \ge  |\hat{\lambda}_{n}|$, and $\{ \hat{\mathbf v}^{i}\}^{n}_{i=1}$ are corresponding orthogonal eigenvectors with unit 2-norm. Let $\hat{\Sigma}=\operatorname{diag}\left(\hat{\lambda}_1,\hat{\lambda}_{2},\cdots,\hat{\lambda}_{k},\hat{\lambda}_1,\hat{\lambda}_{2}, \cdots , \hat{\lambda}_{k}\right)$, $\hat{\mathbf V}=[\hat{\mathbf v}^{1} ,\cdots,\hat{\mathbf v}^{k}]$, $\hat{U}=\mathcal{J}(\hat{\mathbf V})$,
$\hat{W}=\hat{U}\hat{\Sigma}\hat{U}^*$, $\hat{\mathbf Z}=\mathcal{J}^{-1}(\hat{W})$, then
$\hat{\mathbf Z}=\sum_{i=1}^{k}\hat{\lambda}_i\hat{\mathbf v}^{i} (\hat{\mathbf v}^{i}) ^*$ is the optimal rank-k approximation of $\hat{\mathbf Q}$ under $F$-norm.
\end{theorem}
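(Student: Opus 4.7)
The plan is to lift the optimization problem from dual quaternion matrices to their dual complex adjoint counterparts via the mapping $\mathcal{J}$, and then invoke Lemma \ref{lemma2F}. By Lemma \ref{lemma3.5}, for any candidate approximant $\hat{\mathbf{Z}}$ we have $\|\hat{\mathbf{Q}}-\hat{\mathbf{Z}}\|_F^2 = \tfrac{1}{2}\|\mathcal{J}(\hat{\mathbf{Q}})-\mathcal{J}(\hat{\mathbf{Z}})\|_F^2$. Together with the rank correspondence (rank-$k$ on the dual quaternion side is rank-$2k$ on the dual complex adjoint side), minimizing the $F$-norm error over rank-$k$ dual quaternion matrices becomes, up to a factor of two, minimizing $\|\hat{P}-\hat{W}\|_F^2$ over those rank-$2k$ dual complex matrices $\hat{W}$ that belong to $DM(\mathbb{C}^{n\times n})$, where $\hat{P}=\mathcal{J}(\hat{\mathbf{Q}})$.

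First I would apply Lemma \ref{lemma2F} to $\hat{P}$, which is Hermitian by Lemma \ref{lemma3.2}(v), with target rank $2k$. Corollary \ref{corollary3.1} identifies the eigenvalues of $\hat{P}$ as $\{\hat{\lambda}_i\}_{i=1}^n$, each appearing twice, with paired orthogonal eigenvectors $\mathcal{F}(\hat{\mathbf{v}}^i)$ and $\mathcal{F}(\hat{\mathbf{v}}^i j)$. Since for a Hermitian matrix the Frobenius-optimal rank-$2k$ truncation keeps the eigenpairs with largest absolute eigenvalues, the resulting truncation picks out exactly $\hat{\lambda}_1,\ldots,\hat{\lambda}_k$ each with multiplicity two, yielding $\hat{W}=\hat{U}\hat{\Sigma}\hat{U}^{*}$ with $\hat{U}=\mathcal{J}(\hat{\mathbf{V}})$ and $\hat{\Sigma}$ as defined in the statement. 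Pulling back through $\mathcal{J}^{-1}$, the identities in Lemma \ref{lemma3.2}(ii),(iv) give $\hat{\mathbf{Z}}=\mathcal{J}^{-1}(\hat{W})=\hat{\mathbf{V}}\operatorname{diag}(\hat{\lambda}_1,\ldots,\hat{\lambda}_k)\hat{\mathbf{V}}^{*}=\sum_{i=1}^{k}\hat{\lambda}_i\hat{\mathbf{v}}^{i}(\hat{\mathbf{v}}^{i})^{*}$.

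The main obstacle is justifying that the Lemma \ref{lemma2F} optimum, taken over all rank-$2k$ dual complex matrices, actually lies inside the smaller set $DM(\mathbb{C}^{n\times n})$, so that it is realized by some rank-$k$ dual quaternion matrix (otherwise the lift back through $\mathcal{J}^{-1}$ need not be feasible). The key observation is that the standard part of $\hat{P}$ has singular values $|\lambda_{i,st}|$ each occurring in paired blocks of size two, and the paired eigenvectors given by Corollary \ref{corollary3.1} can be organized into the adjoint block form $\mathcal{J}(\hat{\mathbf{V}})$; thus among the Lemma \ref{lemma2F} optimizers one may select the SVD-truncation preserving this adjoint structure, after which the prescribed formula for the dual part automatically yields an element of $DM(\mathbb{C}^{n\times n})$. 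Once this structural feasibility is in place, for any competing rank-$k$ dual quaternion $\hat{\mathbf{Z}}'$ we obtain $\|\hat{\mathbf{Q}}-\hat{\mathbf{Z}}\|_F^2 = \tfrac{1}{2}\|\hat{P}-\hat{W}\|_F^2 \le \tfrac{1}{2}\|\hat{P}-\mathcal{J}(\hat{\mathbf{Z}}')\|_F^2 = \|\hat{\mathbf{Q}}-\hat{\mathbf{Z}}'\|_F^2$, which is the desired optimality.
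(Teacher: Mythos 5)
Your proposal is correct and takes essentially the same route as the paper: lift via $\mathcal{J}$, use Lemma \ref{lemma3.5} to halve the $F$-norm, use Corollary \ref{corollary3.1} for the doubled eigenstructure, invoke Lemma \ref{lemma2F} for the optimal rank-$2k$ approximation of $\hat{P}$, and pull back to get $\sum_{i=1}^{k}\hat{\lambda}_i\hat{\mathbf v}^{i}(\hat{\mathbf v}^{i})^*$. The only difference is that what you call the main obstacle is a non-issue in the paper's argument: it builds $\hat{W}=\mathcal{J}(\hat{\mathbf V})\hat{\Sigma}\mathcal{J}(\hat{\mathbf V})^*$ directly, checks $\hat{U}^*\hat{U}=\hat{I}_{2k}$ via Lemma \ref{lemma3.2}, so this candidate lies in $DM(\mathbb{C}^{n\times n})$ by construction and simultaneously attains the unrestricted rank-$2k$ optimum, with no need to select a structure-preserving optimizer.
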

\begin{proof}
Let $\hat{P}=\mathcal{J}(\hat{\mathbf Q})$. It follows from Corollary \ref{corollary3.1} that the $i$-th column and $k+i$-th column of $\hat{U}$ are eigenvectors of $\hat{P}$ with respect to eigenvalue $\lambda_i$. Suppose that $\hat{\mathbf V}=\hat{V}_1+\hat{V}_2 j$, where $\hat{V}_1$ and $\hat{V}_2$ are dual complex matrix.
Since $\{\hat{\mathbf v}^{i}\}^{k}_{i=1}$ are orthogonal, then $\hat{\mathbf V}^*\hat{\mathbf V}=\hat{\mathbf I}_k$. It follows from (i), (ii) and (iv) in Lemma \ref{lemma3.2} that $$\hat{U}^*\hat{U}=\mathcal{J}(\hat{\mathbf V})^*\mathcal{J}(\hat{\mathbf V})=\mathcal{J}(\hat{\mathbf V}^*\hat{\mathbf V})=
\mathcal{J}(\hat{\mathbf I}_k)=\hat{I}_{2k}.$$
Then by Lemma \ref{lemma2F}, $\hat{W}$ is the optimal rank-2k approximation of $\hat{P}$ under $F$-norm, i.e., $\| \hat{P}-\hat{W} \|^2_{F}\le \| \hat{P}-\hat{S} \|^2_{F}$ holds for any dual complex matrix $\hat{S}$ with rank 2k. Hence, by Lemma \ref{lemma3.5}, for any rank-k approximation $\hat{\mathbf R}$ of $\hat{\mathbf Q}$, it holds 
$$\| \hat{\mathbf Q}-\hat{\mathbf R} \|^2_{F}=\frac{1}{2} 
\| \hat{P}-\mathcal{J}(\hat{\mathbf R})\|^2_{F}\ge
\frac{1}{2} \| \hat{P}-\hat{W} \|^2_{F}
= \| \hat{\mathbf Q}-\hat{\mathbf Z} \|^2_{F}.$$
Thus $\hat{\mathbf Z}$ is the optimal rank-k approximation of $\hat{\mathbf Q}$ under $F$-norm. 

Let $\Sigma=\operatorname{diag}(\hat{\lambda}_1,\hat{\lambda}_{2}, \cdots , \hat{\lambda}_{k})$.
Since 
\begin{align*} 
\hat{U}\hat{\Sigma}\hat{U}^*&=
\begin{bmatrix}
\hat{V}_1  & \hat{V}_2\\
-\overline{\hat{V}_2}   & \overline{\hat{V}_1}
\end{bmatrix}
\begin{bmatrix}
\Sigma  & \\
  &\Sigma
\end{bmatrix}
\begin{bmatrix}
\overline{\hat{V}_1}^T  & -\hat{V}_2^T   \\
 \overline{\hat{V}_2}^T & \hat{V}_1^T
\end{bmatrix}
\\&=\begin{bmatrix}
\hat{ V}_1 \Sigma \overline{\hat{ V}_1}^T+ \hat{ V}_2 \Sigma \overline{\hat{ V}_2}^T & -\hat{ V}_1 \Sigma\hat{ V}_2^T+ \hat{ V}_2 \Sigma\hat{ V}_1^T \\
-\overline{\hat{ V}_2} \Sigma\overline{\hat{ V}_1}^T+
\overline{\hat{ V}_1} \Sigma\overline{\hat{ V}_2}^T  & \overline{\hat{ V}_2} \Sigma\hat{ V}_2^T+\overline{\hat{ V}_1} \Sigma\hat{ V}_1^T
\end{bmatrix},
\end{align*}
then
\begin{align*}
\hat{\mathbf Z}=\mathcal{J}^{-1}(\hat{\mathbf W})&=\hat{V}_1\Sigma \overline{\hat{V}_1}^T+ \hat{V}_2 \Sigma \overline{\hat{V}_2}^T +(-\overline{\hat{V}_2}\Sigma\overline{\hat{V}_1}^T+
\overline{\hat{V}_1}\Sigma\overline{\hat{V}_2}^T) j
\\&=\hat{\mathbf V}\hat{\Sigma}\hat{\mathbf V}^*=\sum_{i=1}^{k}\hat{\lambda}_i\hat{\mathbf v}^{i} (\hat{\mathbf v}^{i})^* .
\end{align*}
\end{proof}

Pursuant to Theroem \ref{theorem3.5}, we introduce Algorithm \ref{rank-1-F-approximation} for computing the optimal rank-k approximation of dual quaternion Hermitian matrices under $F$-norm.

\begin{algorithm}
    \caption{The optimal rank-k approximation of dual quaternion Hermitian matrices under $F$-norm}\label{rank-1-F-approximation}
    \begin{algorithmic}
    \REQUIRE $\hat{\mathbf Q}\in \hat{\mathbb H}^n$.
    \ENSURE $\hat{\mathbf V}$.
    \STATE  Let $\hat{\mathbf V}=\hat{\mathbf O}$.
    \FOR{$t=1:k$}
    \STATE  Compute the dominate eigenvalue $\hat{\lambda}_t$ and corresponding eigenvector $\hat{\mathbf v}_t\in \hat{\mathbb{Q}}_2^{n\times1}$ of $\hat{\mathbf Q}$ by Algorithm \ref{Power_method_algorithm}.
    \STATE Compute $\hat{\mathbf Q}=\hat{\mathbf Q}-\hat{\lambda}_t\hat{\mathbf v}_t\hat{\mathbf v}_t^*$. $\hat{\mathbf V}=\hat{\mathbf V}+\hat{\lambda}_t\hat{\mathbf v}_t\hat{\mathbf v}_t^*$.
    \ENDFOR
    \end{algorithmic}
\end{algorithm}

Next, utilizing Lemma \ref{lemma2F*} and Lemma \ref{lemma3.5}, we obtain the optimal rank-k approximation of a dual quaternion Hermitian matrix under $F^*$-norm.
\begin{theorem}\label{F*}
Let $\hat{\mathbf Q}=\tilde{\mathbf Q}_{st} +\tilde{\mathbf Q}_{\mathcal I}\varepsilon\in \hat{\mathbb H}^n$, $\{ \lambda_{i}\}_{i=1}^{n}$ are eigenvalues of $\tilde{\mathbf Q}_{st}$, satisfying $|\lambda_{1}|\ge |\lambda_{2}|\ge \cdots \ge |\lambda_{n}|$, $\{\tilde{\mathbf v}^{i}\}^{n}_{i=1}$ are corresponding orthogonal eigenvectors with unit 2-norm. Suppose that $\tilde{\mathbf v}^{i} =\mathbf v^{i}_{1}+\mathbf v^{i} _{2} j$, 
$\hat{P}=\mathcal{J}(\hat{\mathbf Q})=P_1+P_2\varepsilon$,
$\Sigma=\operatorname{diag}(\lambda_1,\lambda_{2}, \cdots , \lambda_{k},\lambda_1,\lambda_{2}, \cdots , \lambda_{k})$,
$V=\begin{bmatrix}
V_1  & V_2\\
-\overline{V_2}   & \overline{V_1}
\end{bmatrix}$, where $V_1=[\mathbf v^{1} _{1},\cdots,\mathbf v^{k}_{1}]$, $V_2=[ \mathbf v^{1} _{2},\cdots,\mathbf v^{k} _{2}]$.
Denote \begin{equation}\label{kf*}
\hat{U}=V\Sigma V^*+\left(P_2-(I_{2n}- V V^*) P_2( I_{2n}- V V^*)\right)\varepsilon,
\end{equation}
then $\hat{\mathbf W}=\mathcal{J}^{-1}(\hat{U})$ is the optimal rank-k approximation of $\hat{\mathbf Q}$ under $F^*$-norm.
\end{theorem}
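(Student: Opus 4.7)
The plan is to lift the optimization to the dual complex adjoint setting, invoke Lemma \ref{lemma2F*} on $\hat{P}=\mathcal{J}(\hat{\mathbf Q})$, and then descend via $\mathcal{J}^{-1}$ using Lemma \ref{lemma3.5}, exactly as in the proof of Theorem \ref{theorem3.5}, but with the extra delicacy that the $F^*$-norm optimum requires explicit control of the dual part.

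First I would observe that $\hat{P}=P_1+P_2\varepsilon$ is dual complex Hermitian by Lemma \ref{lemma3.2}(v). Applying Lemma \ref{lemma3.1} to the quaternion Hermitian matrix $\tilde{\mathbf Q}_{st}$, the eigenvalues of $P_1$ are precisely $\lambda_1,\lambda_1,\lambda_2,\lambda_2,\dots,\lambda_n,\lambda_n$, and the columns of the matrix $V$ in the theorem are $2k$ orthonormal eigenvectors corresponding to the $2k$ eigenvalues of largest absolute value. Hence $V\Sigma V^*$ is the best rank-$2k$ approximation of $P_1$ under the Frobenius norm. The SVD of the Hermitian matrix $P_1$ in the form required by Lemma \ref{lemma2F*} can be taken as $P_1=U_1|\Sigma|V_1^*$ with $V_1=V$ and $U_1=V\,\mathrm{sign}(\Sigma)$, so that $U_1U_1^*=V_1V_1^*=VV^*$. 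Substituting into the formula $(\hat U\hat\Sigma\hat V^*)_{\mathcal I}=A_2-(I-U_1U_1^*)A_2(I-V_1V_1^*)$ from Lemma \ref{lemma2F*} yields exactly the dual part $P_2-(I_{2n}-VV^*)P_2(I_{2n}-VV^*)$ appearing in \eqref{kf*}. Therefore the $\hat U$ of the theorem is an optimal rank-$2k$ approximation of $\hat P$ under $F^*$-norm.

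Next I would verify that $\hat U\in DM(\mathbb C^{n\times n})$, so that $\mathcal{J}^{-1}(\hat U)$ is a legitimate dual quaternion matrix. Using Lemma \ref{lemma3.2}(i)--(iv), the block structure of $V$ gives $V=\mathcal{J}(\tilde{\mathbf V})$ with $\tilde{\mathbf V}=[\tilde{\mathbf v}^1,\dots,\tilde{\mathbf v}^k]$, and $\Sigma=\mathcal{J}(\Sigma')$ for the real diagonal $\Sigma'=\operatorname{diag}(\lambda_1,\dots,\lambda_k)$. Consequently $V\Sigma V^*=\mathcal{J}(\tilde{\mathbf V}\Sigma'\tilde{\mathbf V}^*)$ and $I_{2n}-VV^*=\mathcal{J}(\tilde{\mathbf I}_n-\tilde{\mathbf V}\tilde{\mathbf V}^*)$, so the dual part in \eqref{kf*} is $\mathcal{J}\bigl(\tilde{\mathbf Q}_{\mathcal I}-(\tilde{\mathbf I}_n-\tilde{\mathbf V}\tilde{\mathbf V}^*)\tilde{\mathbf Q}_{\mathcal I}(\tilde{\mathbf I}_n-\tilde{\mathbf V}\tilde{\mathbf V}^*)\bigr)$. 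Thus $\hat U\in DM(\mathbb C^{n\times n})$ and $\hat{\mathbf W}=\mathcal{J}^{-1}(\hat U)$ is well defined.

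Finally I would close the optimality argument. For any rank-$k$ dual quaternion matrix $\hat{\mathbf R}$, $\mathcal{J}(\hat{\mathbf R})$ is a rank-$2k$ dual complex matrix, and Lemma \ref{lemma3.5} gives $\|\hat{\mathbf Q}-\hat{\mathbf R}\|_{F^*}^2=\tfrac12\|\hat P-\mathcal{J}(\hat{\mathbf R})\|_{F^*}^2$. Since $\hat U$ is optimal over all rank-$2k$ dual complex matrices and actually lies in the subset $DM$,
\begin{equation*}
\tfrac12\|\hat P-\mathcal{J}(\hat{\mathbf R})\|_{F^*}^2\ \geq\ \tfrac12\|\hat P-\hat U\|_{F^*}^2\ =\ \|\hat{\mathbf Q}-\hat{\mathbf W}\|_{F^*}^2,
\end{equation*}
proving optimality. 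The main obstacle I expect is step two: making Lemma \ref{lemma2F*} (stated for general SVD) produce precisely the projector $VV^*$ on both sides in the Hermitian case, which hinges on the identification $U_1U_1^*=V_1V_1^*$ regardless of sign choices in the singular value decomposition of $P_1$, together with checking that the ordering of singular values by magnitude is compatible with the ordering of eigenvalues $|\lambda_1|\geq\cdots\geq|\lambda_n|$ required for the best rank-$2k$ truncation.
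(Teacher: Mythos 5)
Your proposal is correct and follows essentially the same route as the paper's proof: lift to $\hat P=\mathcal{J}(\hat{\mathbf Q})$, show $V\Sigma V^*$ is the best rank-$2k$ approximation of $P_1$ (using the orthonormality of the columns of $V$ and the doubled eigenvalues), invoke Lemma \ref{lemma2F*} to obtain the dual part in \eqref{kf*}, check $\hat U\in DM(\mathbb{C}^{n\times n})$, and descend via Lemma \ref{lemma3.5}. Your sign-adjusted factorization $U_1=V\operatorname{sign}(\Sigma)$, $V_1=V$ and the $\mathcal{J}$-based verification that $\hat U\in DM(\mathbb{C}^{n\times n})$ simply spell out two steps that the paper handles implicitly (``by direct verification''), so there is no substantive difference in approach.
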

\begin{proof}
Since $\tilde{\mathbf v}^{i}$ is the eigenvector of $\tilde{\mathbf Q}_{st}$ with respect to eigenvalue
$\lambda_{i}$, it follows from Lemma \ref{corollary3.1} that 
$\begin{bmatrix}
\mathbf v^{i}_{1} \\
-\overline{\mathbf v^{i}_{2}}
\end{bmatrix}$ and $\begin{bmatrix}
\mathbf v^{i}_{2} \\
\overline{\mathbf v^{i}_{1}}
\end{bmatrix}$ are eigenvectors of $P_1$ with respect to
eigenvalue $\lambda_{i}$. Since $\left \{ \tilde{\mathbf v}^{i} \right \} ^{k}_{i=1}$ are orthogonal, then
\begin{align*} 
\tilde{\mathbf I}_k&=(\overline{V_1}-V_2 j)^T( V_1+V_2 j)=\overline{ V_1}^TV_1+V_2^T\overline{V_2}+
(\overline{V_1}^T V_2- V_2^T\overline{V_1}) j.
\end{align*}
Hence,
\begin{align*} 
V^*V&=\begin{bmatrix}
\overline{V_1}^T  & -V_2^T   \\
\overline{V_2}^T & V_1^T
\end{bmatrix}
\begin{bmatrix}
V_1  & V_2\\
-\overline{V_2}   & \overline{V_1}
\end{bmatrix}
=I_{2k}.
\end{align*}
Therefore, $V\Sigma V^*$ is the optimal rank-2k approximation of $P_1$ under $F$-norm.
It follows from Lemma \ref{lemma2F*} that $\hat{U}$ is the optimal rank-2k approximation of $\hat{P}$ under $F^*$-norm, i.e., it holds $\| \hat{P}-\hat{U} \|^2_{F^*}\le \| \hat{P}-\hat{S}\|^2_{F^*}$ for any dual complex Hermitian matrix $\hat{S}$ with rank 2k. By direct verification, $\hat{U}\in DM(\mathbb{C}^{n\times n})$.
Hence, by Lemma \ref{lemma3.5}, for any rank-k approximation $\hat{\mathbf R}$ of $\hat{\mathbf Q}$, it holds 
$$\| \hat{\mathbf Q}-\hat{\mathbf R}\|^2_{F^*}=\frac{1}{2} 
\| \hat{P}-\mathcal{J}(\hat{\mathbf R}) \|^2_{F^*}\ge
\frac{1}{2} \| \hat{P}-\hat{U} \|^2_{F^*}
=\| \hat{\mathbf Q}-\hat{\mathbf W} \|^2_{F^*}.$$
Therefore, $\hat{\mathbf W}$ is the optimal rank-k approximation of $\hat{\mathbf Q}$ under $F^*$-norm.
\end{proof}

By Lemma \ref{F*}, we introduce algorithm \ref{rank-k-F*-approximation} for computing the optimal rank-k approximation of dual quaternion Hermitian matrices under $F^*$-norm.

\begin{algorithm}
    \caption{The optimal rank-k approximation of dual quaternion Hermitian matrices under the $F^*$-norm}\label{rank-k-F*-approximation}
    \begin{algorithmic}
    \REQUIRE $\hat{\mathbf Q}\in \hat{\mathbb H}^n$.
    \ENSURE $\hat{\mathbf W}$.
    \STATE Compute $\hat{P}=P_1+P_2\varepsilon=\mathcal{J}(\hat{\mathbf Q})$.
    \STATE Compute the largest k eigenvalues $\{\lambda_{i}\}_{i=1}^{k}$
    and corresponding orthogonal eigenvectors $\{\tilde{\mathbf v}^{i}=\mathbf v^{i}_{1}+\mathbf v^{i} _{2} j\}^{k}_{i=1}$ with unit 2-norm of $\tilde{\mathbf Q}_{st}$ by power method.
    \STATE Let $\Sigma=\operatorname{diag}(\lambda_1,\lambda_{2}, \cdots , \lambda_{k},\lambda_1,\lambda_{2}, \cdots , \lambda_{k})$,
    $V_1=[\mathbf v^{1} _{1},\cdots,\mathbf v^{k}_{1}]$, $V_2=[ \mathbf v^{1} _{2},\cdots,\mathbf v^{k} _{2}]$, and 
    $V=\begin{bmatrix}
    V_1  & V_2\\
    -\overline{V_2} & \overline{V_1}
    \end{bmatrix}$.
    \STATE Compute $\hat{U}=V\Sigma V^*+\left(P_2-(I_{2n}- V V^*) P_2( I_{2n}- V V^*)\right)\varepsilon$.
    \STATE Compute $\hat{\mathbf W}=\mathcal{J}^{-1}(\hat{U})$.
    \end{algorithmic}
\end{algorithm}

For the special case $k=1$, to further reduce the computational cost of computing the optimal rank‑1 approximation of a dual quaternion Hermitian matrix under the $F^*$-norm, we propose the following efficient algorithm (\ref{rank-1-F*-approximation}).

\begin{theorem}\label{3.2}
Let $\hat{\mathbf Q}=\tilde{\mathbf Q}_{st} +\tilde{\mathbf Q}_{\mathcal I}\varepsilon\in \hat{\mathbb H}^n$, $\hat{P}=P_1+P_2\varepsilon=\mathcal{J}(\hat{\mathbf Q})$. Suppose that $\lambda$ is the dominate eigenvalue of $P_1$, and 
$\mathbf v=[\mathbf v_1^T ~ \mathbf v_2^T]^T$ is the corresponding unit eigenvector, where $\mathbf v_1,\mathbf v_2\in \mathbb C^{n\times 1}$. 
Denote $\Sigma_1=\operatorname{diag}(\lambda,\lambda)$, 
$V=\begin{bmatrix}
\mathbf v_1 &-\overline{\mathbf v_2} \\
\mathbf v_2 &\overline{\mathbf v_1}
\end{bmatrix}$, and $S= V^* P_2 V$.
Suppose that $\mu$ is the eigenvalue of $S$, and 
$\mathbf u=[\mathbf u_1 ~ \mathbf u_2]^T\in \mathbb C^{2\times 1}$ is the corresponding unit eigenvector. Denote $\Sigma_2=\operatorname{diag}(\mu,\mu)$, and
$U=\begin{bmatrix}
\mathbf u_1 &-\overline{\mathbf u_2} \\
\mathbf u_2 &\overline{\mathbf u_1}
\end{bmatrix}$.
Denote $V_1=U_1=V U$ and $V_2=U_2=\frac{1}{\lambda}( P_2 V_1- V_1\Sigma_2)$. Let $\hat{V}=\hat{U}= V_1+ V_2\varepsilon$, $\hat{\Sigma}=\Sigma_1+\Sigma_2\varepsilon$, then $\hat{U}\hat{\Sigma}\hat{V}^*$ is the optimal rank-2 approximation of $\hat{P}$ under $F^*$-norm. 
Let $\hat{\mathbf u}=\mathcal{J}^{-1}(\hat{U})$, then 
$(\lambda+\mu\varepsilon)\hat{\mathbf u}(\hat{\mathbf u})^*$ is the optimal rank-1 approximation of $\hat{\mathbf Q}$ under $F^*$-norm.
\end{theorem}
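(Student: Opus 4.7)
The plan is to derive Theorem~\ref{3.2} as a specialization of Theorem~\ref{F*} to the case $k=2$ and then transfer the conclusion back to the dual quaternion side through the isomorphism $\mathcal{J}$. By Lemma~\ref{lemma3.5} and the rank relationship between a dual quaternion matrix and its adjoint, producing the optimal rank-$1$ approximation of $\hat{\mathbf Q}$ under the $F^*$-norm is equivalent to producing the optimal rank-$2$ approximation of $\hat P=\mathcal J(\hat{\mathbf Q})$ under the $F^*$-norm.

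First I would verify the structural ingredients. By Corollary~\ref{corollary3.1}, the partner vector given by the second column of $V$ is a unit eigenvector of $P_1$ with the same dominant eigenvalue $\lambda$ and is orthogonal to the first column, so $V$ is a $2n\times 2$ matrix in adjoint form whose columns form an orthonormal basis of the dominant eigenspace of $P_1$. Because $P_2$ is complex Hermitian and $\hat{P}\in DM(\mathbb C^{n\times n})$, the block $S=V^*P_2V$ is $2\times 2$, Hermitian, and in adjoint form; a short direct computation shows that a Hermitian $2\times 2$ matrix of adjoint form must be a real scalar multiple of $I_2$, so $S=\mu I_2$ with $\mu\in\mathbb{R}$. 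Consequently $\Sigma_2=\mu I_2$, any adjoint-form unitary $U$ diagonalizes $S$, and $V_1=VU$ is again in adjoint form with columns spanning the same $\lambda$-eigenspace.

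Next I would match the construction to formula~(\ref{kf*}). Using $V_1^*P_2V_1=U^*SU=\mu I_2=\Sigma_2$ one rewrites the compact $V_2$ in Theorem~\ref{3.2} as
\[
V_2=\tfrac{1}{\lambda}(P_2V_1-V_1V_1^*P_2V_1)=\tfrac{1}{\lambda}(I_{2n}-V_1V_1^*)P_2V_1,
\]
which is exactly the dual correction prescribed by Lemma~\ref{lemma2F*} in the degenerate setting $\Omega=0,\ \Psi=0$ (only one distinct singular value $\lambda$ in the truncated spectrum). Expanding
\[
\hat U\hat\Sigma\hat V^*=V_1\Sigma_1V_1^*+\bigl(V_1\Sigma_2V_1^*+V_2\Sigma_1V_1^*+V_1\Sigma_1V_2^*\bigr)\varepsilon
\]
and substituting $V_1V_1^*=VV^*$ together with $V^*P_2V=\mu I_2$ collapses the dual part to $VV^*P_2+P_2VV^*-\mu VV^*$, which coincides with $P_2-(I_{2n}-VV^*)P_2(I_{2n}-VV^*)$ as prescribed by~(\ref{kf*}). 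Hence $\hat U\hat\Sigma\hat V^*$ is the optimal rank-$2$ approximation of $\hat P$ under the $F^*$-norm.

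Finally I would push the result back through $\mathcal J^{-1}$. Since $V_1$, $V_2$, $\Sigma_1$, $\Sigma_2$ all lie in adjoint form, $\hat U\hat\Sigma\hat V^*\in DM(\mathbb C^{n\times n})$; by Lemma~\ref{lemma3.5}, the optimal rank-$2$ $F^*$-approximation of $\hat P$ is the $\mathcal J$-image of the optimal rank-$1$ $F^*$-approximation of $\hat{\mathbf Q}$. Applying property~(ii) of Lemma~\ref{lemma3.2}, together with the observation that $\hat\Sigma=(\lambda+\mu\varepsilon)I_2$ is the adjoint of the dual number $\lambda+\mu\varepsilon$, yields $\mathcal J^{-1}(\hat U\hat\Sigma\hat V^*)=(\lambda+\mu\varepsilon)\hat{\mathbf u}\hat{\mathbf u}^*$ with $\hat{\mathbf u}=\mathcal F^{-1}(\mathbf w_1+\mathbf w_2\varepsilon)$, which is the asserted expression. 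The main obstacle is the algebraic bookkeeping that forces the $2\times 2$ block $S$ to collapse to a scalar and that reconciles the compact $V_2$-formula of Theorem~\ref{3.2} with the projection formula of Theorem~\ref{F*}; once these identities are in hand, the transfer through $\mathcal J$ is automatic.
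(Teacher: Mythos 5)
Your proposal is correct and follows essentially the same route as the paper's proof: build the partner eigenvectors via Corollary~\ref{corollary3.1} so that $V$ and $U$ are unitary and $V_1\Sigma_1V_1^*$ is the best rank-2 approximation of $P_1$, specialize Lemma~\ref{lemma2F*} (equivalently formula~(\ref{kf*}) of Theorem~\ref{F*}) with $\Delta=\Psi=\Omega=O$ to obtain the stated $V_2$, and transfer rank-2 optimality for $\hat P$ back to rank-1 optimality for $\hat{\mathbf Q}$ through $\mathcal J$ and Lemma~\ref{lemma3.5}. Your extra observations — that $S$ collapses to the real scalar matrix $\mu I_2$ and the direct expansion showing the dual part equals $P_2-(I_{2n}-VV^*)P_2(I_{2n}-VV^*)$ — are correct elaborations of steps the paper leaves implicit, not a different argument.
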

\begin{proof}
By Corollary \ref{corollary3.1}, the vector $[-\overline{\mathbf v_2}^T~\overline{\mathbf v_1}^T]^T$ is also a unit eigenvector of $P_1$ corresponding to eigenvalue $\lambda$, and is orthogonal to $\mathbf v$. Hence $V \Sigma_1 V^*$ is the optimal rank-2 approximation of $P_1$. Similarly, Corollary \ref{corollary3.1} implies that $[-\overline{\mathbf u_2}~\overline{\mathbf u_1}]^T$ is a unit eigenvector of $S$ associated with eigenvalue $\mu$, orthogonal to $\mathbf u$. It follows that $U$ is unitary. Therefore, $V_1 \Sigma_1 V_1^*= V U \Sigma_1 U^* V^*= V\Sigma_1 V^*$. Hence, $ V_1 \Sigma_1 V_1^*$ is also the optimal rank-2 approximation of $P_1$.

Let $\hat{A}=\hat{P}$, and using the above $\Sigma_1, V_1, U_1$, in Lemma \ref{lemma2F*}, we have $\Delta= O^{2\times 2}$ and $R=U_1A_2V_1^*=\Sigma_2$ is a diagonal matrix, then $\Psi= O$ and $\frac12\operatorname{diag}(\mathrm{sym}(R))=\Sigma_2$. Take $\Omega= O$, then
\begin{align*} 
V_2=U_2&=U_1\left(\mathrm{sym}(R \Sigma_1)
\odot \Delta+\Omega+\Psi\right)+
(I_m- U_1U_1^*)A_2V_1
\Sigma_1^{-1}
\\&=(I_m-U_1U_1^*)A_2V_1
\Sigma_1^{-1}
=\frac{1}{\lambda} (A_2V_1-U_1U_1^*A_2V_1)
\\&=\frac{1}{\lambda} (A_2V_1-U_1\Sigma_2)=\frac{1}{\lambda} (P_2V_1-U_1\Sigma_2).
\end{align*}
Then by Lemma \ref{lemma2F*}, $\hat{U}\hat{\Sigma}\hat{V}^*$ is the optimal rank-2 approximation of $\hat{P}$ under $F^*$-norm. By direct calculation, it holds $\mathcal J^{-1}(\hat{U}\hat{\Sigma}\hat{V}^*)=(\lambda+\mu\varepsilon)\hat{\mathbf u}(\hat{\mathbf u})^*$. Similarly to the proof of Theorem \ref{F*}, $(\lambda+\mu\varepsilon)\hat{\mathbf u}(\hat{\mathbf u})^*$ is likewise the optimal rank-1 approximation of $\hat{\mathbf Q}$ under the $F^*$-norm.
\end{proof}

\begin{algorithm}
    \caption{An efficient algorithm to compute the optimal rank-1 approximation of dual quaternion Hermitian matrices under the $F^*$-norm}\label{rank-1-F*-approximation}
    \begin{algorithmic}
    \REQUIRE $\hat{\mathbf Q}\in \hat{\mathbb H}^n$.
    \ENSURE $\hat{\mathbf V}$.
    \STATE Compute $\hat{P}=P_1+P_2\varepsilon=\mathcal{J}(\hat{\mathbf Q})$.
    \STATE Compute the dominate eigenvalue $\lambda$ of $P_1$ and corresponding unit eigenvector $\mathbf v=[\mathbf v_1^T~\mathbf v_2^T]^T$ by power method. 
    \STATE Let $V=\begin{bmatrix}
    \mathbf v_1 &-\overline{\mathbf v_2} \\
    \mathbf v_2 &\overline{\mathbf v_1}
    \end{bmatrix}$, compute $R=V^*P_2V$.
    \STATE Compute the eigenvalue $\mu$ and the corresponding unit eigenvector $\mathbf u=[\mathbf u_1~\mathbf u_2]^T$ of the $2\times 2$ matrix $R$ .
    \STATE Let $U=\begin{bmatrix}
    \mathbf u_1 &-\overline{\mathbf u_2} \\
    \mathbf u_2 &\overline{\mathbf u_1}
    \end{bmatrix}$, and $\Sigma=\operatorname{diag}(\mu,\mu)$.
    \STATE Compute $V_1=VU$, $V_2=\frac{1}{\lambda} (P_2W_1- W_1\Sigma)$ and $\hat{\mathbf u}=\mathcal{J}^{-1}(V_1+V_2\varepsilon)$.
    \STATE Compute $\hat{\mathbf V}=(\lambda+\mu\varepsilon)\hat{\mathbf u}(\hat{\mathbf u})^*$.
    \end{algorithmic}
\end{algorithm}

Since computing the dual term $$W_2=\frac{1}{\lambda} (P_2W_1- W_1\Sigma)$$ in Algorithm \ref{rank-1-F*-approximation} is far more succinct than computing the dual term $$P_2-(I_{2n}- V V^*) P_2( I_{2n}- V V^*)=V V^*P_2V V^*-V V^*P_2-P_2V V^*$$ in Algorithm \ref{rank-k-F*-approximation} for $k=1$, this succinctness highlights the clear computational advantage of Algorithm \ref{rank-1-F*-approximation}.

\section{The improved two-block coordinate descent method}\label{section5}

Recently, Cui and Qi \cite{cui2024power} introduced a two-block coordinate descent method to solve the PGO problem under the $F$-norm. Building on the results from the previous sections, we now reformulate the PGO problem using the more appropriate and robust $F^*$-norm and  tackle it within the same two-block coordinate descent framework. In addition, we augment the two-block coordinate descent method with several practical enhancements, including proper parameter selection, stagnation-based termination criteria, and an effective spectral initialization strategy.

We have provided an example in Section \ref{Preliminaries} to demonstrate that the $F^*$-norm more faithfully captures the magnitude of the dual part of a dual quaternion matrix than the $F$-norm. The following lemma further offers the dual part of the $F$-norm an upper bound.

\begin{lemma}\label{ll}
Let $\hat{\mathbf{Q}}=\tilde{\mathbf{Q}}_{st}+\tilde{\mathbf{Q}}_{\mathcal I}\varepsilon\in\hat{\mathbb Q}^{n\times m}$. Then $$|\|\hat{\mathbf Q}\|_{F,\mathcal I}|\le\|\tilde{\mathbf Q}_{\mathcal I}\|_F.$$
\end{lemma}
\begin{proof}
By definition, when $\tilde{\mathbf{Q}}_{st}\ne \tilde{\mathbf O}$, 
\begin{align*} 
|\|\hat{\mathbf Q}\|_{F,\mathcal I}|&=\left | \frac{sc (tr(\tilde{\mathbf Q}_{st}^{\ast }\tilde{\mathbf Q}_{\mathcal I}))}{\|\tilde{\mathbf Q}_{st} \|_{F}} \right | \le \frac{\sum_{ij}^{}|\tilde{\mathbf{Q}}_{st,ij}||\tilde{\mathbf{Q}}_{\mathcal I,ij}| }{\|\tilde{\mathbf Q}_{st} \|_{F}}
\le \frac{\|\tilde{\mathbf{Q}}_{st}\|_F \|\tilde{\mathbf{Q}}_{\mathcal I}\|_F}{\|\tilde{\mathbf{Q}}_{st}\|_F}
=\|\tilde{\mathbf{Q}}_{\mathcal I}\|_F.
\end{align*}
When $\tilde{\mathbf{Q}}_{st}= \tilde{\mathbf O}$, $|\|\hat{\mathbf Q}\|_{F,\mathcal I}|=\|\tilde{\mathbf Q}_{\mathcal I}\|_F$.
\end{proof}

Lemma \ref{ll} demonstrates that $\|\hat{\mathbf Q}\|^2_{F,\mathcal I}\le \|\tilde{\mathbf Q}_{\mathcal I}\|^2_F$. Note that $$\|\hat{\mathbf Q}\|^2_{F^*}=\|\tilde{\mathbf Q}_{st}\|^2_{F}+\|\tilde{\mathbf Q}_{\mathcal I}\|^2_{F}\varepsilon.$$
This also shows that using $F^*$-norm is both natural and advantageous.

Now, we consider reformulating the PGO problem under the $F^*$-norm, i.e.,
\begin{equation}
\min_{\hat{\mathbf{x}}\in\hat{\mathbb{U}}^{n\times1}}\|P_E(\hat{\mathbf{Q}}-\hat{\mathbf{x}}\hat{\mathbf{x}}^*)\|_{F^*}^2.
\end{equation}
Similarly to the analysis in \cite{cui2024power}, we transform the problem into solving the following problem:
\begin{equation}
\begin{aligned}
\min_{\hat{\mathbf{X}}_1,\hat{\mathbf{X}}_2}&\frac12\|P_E(\hat{\mathbf{X}}_1-\hat{\mathbf{Q}})\|_{F^*}^2+\frac\rho2\|\hat{\mathbf{X}}_1-\hat{\mathbf{X}}_2\|_{F^*}^2,
\\\mathrm{s.t.}&~\hat{\mathbf{X}}_1\in C_1,\hat{\mathbf{X}}_2\in C_2.
\end{aligned}
\end{equation}

Then, employing the two-block coordinate descent method, at each iteration $k$, we alternately compute $\hat{\mathbf{X}}_1^{(k)}$ and $\hat{\mathbf{X}}_2^{(k)}$ by solving the following subproblems:
\begin{align}
\hat{\mathbf{X}}_1^{(k)}&=\underset{\hat{\mathbf{X}}_1\in C_1}{\operatorname*{argmin}}
\frac12\|P_E(\hat{\mathbf{X}}_1-\hat{\mathbf{Q}})\|_{F^*}^2+\frac{\rho^{(k-1)}}2\|\hat{\mathbf{X}}_1-\hat{\mathbf{X}}_2^{(k-1)}\|_{F^*}^2,\label{qq1}\\
\hat{\mathbf{X}}_2^{(k)}&=\underset{\hat{\mathbf{X}}_2\in C_2}{\operatorname{argmin}} 
\frac{\rho^{(k-1)}}2\|\hat{\mathbf{X}}_2
-\hat{\mathbf{X}}_1^{(k)}\|_{F^*}^2.\label{qq2}
\end{align}

Now, we show that subproblem \eqref{qq1} has a closed-form solution.

According to Proposition 2 in \cite{cui2024power}, we have the following conclusion.
\begin{lemma}\label{Lemma5.2}
Let $\hat{q}\in \hat{\mathbb{Q}}$ and $\tilde{q}_{st}\ne \tilde{0}$, then $\hat{u} = \frac{\hat{q}}{|\hat{q}|}$ is the optimal solution to the following problem:
$$\hat{u} = \underset{\hat{v} \in \hat{\mathbb{U}}}{\arg\min}~|\tilde{v}_{st} - \tilde{q}_{st}|^2 + \left| \tilde{v}_\mathcal{I} - \frac{\tilde{q}_\mathcal{I}}{|\tilde{q}_{st}|} \right|^2 \epsilon.$$
\end{lemma}

By Lemma \ref{Lemma5.2}, we can derive the following lemma.
\begin{lemma}\label{lemma5.3}
Let $\hat{q}\in \hat{\mathbb{Q}}$, denote
\begin{equation*}
\hat{u}=\begin{cases}
\frac{\tilde{q}_{st}+|\tilde{q}_{st}|\tilde{q}_{\mathcal{I}}\varepsilon}{|\tilde{q}_{st}+
|\tilde{q}_{st}|\tilde{q}_{\mathcal{I}}\varepsilon|}, &\text{if}~~\tilde{q}_{st}\ne \tilde{0},\\
\tilde{c}+\tilde{q}_{\mathcal{I}}\varepsilon\in \hat{\mathbb U}, & \text{if}~~\tilde{q}_{st}= \tilde{0},
\end{cases}
\end{equation*}
then $\hat{u}$ is the optimal solution to 
$$\hat{u} = \underset{\hat{v} \in \hat{\mathbb{U}}}{\arg\min}~|\tilde{v}_{st} - \tilde{q}_{st}|^2 + | \tilde{v}_\mathcal{I} - \tilde{q}_\mathcal{I}|^2 \epsilon.$$
\end{lemma}
\begin{proof}
When $\tilde{q}_{st}\ne \tilde{0}$, then by Lemma \ref{Lemma5.2}, $\hat{u}$ is the optimal solution to
\begin{align*} 
\hat{u} &= \underset{\hat{v} \in \hat{\mathbb{U}}}{\arg\min}~|\tilde{v}_{st} - 
\tilde{q}_{st}|^2 + \left| \tilde{v}_\mathcal{I} - \frac{|\tilde{q}_{st}|\tilde{q}_{\mathcal{I}}}{|\tilde{q}_{st}|} \right |^2 \epsilon
\\&=\underset{\hat{v} \in \hat{\mathbb{U}}}{\arg\min}~|\tilde{v}_{st} - 
\tilde{q}_{st}|^2 + | \tilde{v}_\mathcal{I} - \tilde{q}_{\mathcal{I}}|^2 \epsilon.
\end{align*}
When $\tilde{q}_{st}=\tilde{0}$, 
$$|\tilde{v}_{st}-\tilde{q}_{st}|^2 + | \tilde{v}_\mathcal{I} - \tilde{q}_{\mathcal{I}}|^2 \epsilon
=|\tilde{v}_{st}|^2 + | \tilde{v}_\mathcal{I} - \tilde{q}_{\mathcal{I}}|^2 \epsilon
\ge 1+| \tilde{v}_\mathcal{I} - \tilde{q}_{\mathcal{I}}|^2 \epsilon \ge 1,$$
then for any quaternion $\tilde{c}$ satisfying $\tilde{c}+\tilde{q}_{\mathcal{I}}\varepsilon\in \hat{\mathbb U}$ is the the optimal solution to
$$\hat{u} = \underset{\hat{v} \in \hat{\mathbb{U}}}{\arg\min}~|\tilde{v}_{st} |^2 + | \tilde{v}_\mathcal{I} - \tilde{q}_\mathcal{I}|^2 \epsilon.$$
\end{proof}

By \eqref{qq1}, for $i\ne j$, 
\begin{align*} 
(\hat{x}_{1,ij}^{(k)},\hat{x}_{1,ji}^{(k)})=\underset{\hat{u},\hat{v} 
\in \hat{\mathbb{U}},\hat{u}=\hat{v}^*}{\arg\min}~
\frac{1}{2}&(\delta_{E,ij}\|\hat{u}-\hat{q}_{ij}\|^2_{F^*}+
\delta_{E,ji}\|\hat{v}-\hat{q}_{ij}\|^2_{F^*} \\&+
\rho^{(k-1)}\|\hat{u}-\hat{x}_{2,ij}^{(k-1)}\|^2_{F^*} +
\rho^{(k-1)}\|\hat{v}-\hat{x}_{2,ji}^{(k-1)}\|^2_{F^*})
\\=\underset{\hat{u} \in \hat{\mathbb{U}},\hat{v}=\hat{u}^*}{\arg\min}~
\frac{1}{2}&(\delta_{E,ij}\|\hat{u}-\hat{q}_{ij}\|^2_{F^*}+
\delta_{E,ji}\|\hat{u}-\hat{q}^*_{ij}\|^2_{F^*} \\&+
\rho^{(k-1)}\|\hat{u}-\hat{x}_{2,ij}^{(k-1)}\|^2_{F^*} +
\rho^{(k-1)}\|\hat{u}-(\hat{x}_{2,ji}^{(k-1)})^*\|^2_{F^*})
\\=\underset{\hat{u} \in \hat{\mathbb{U}},\hat{v}=\hat{u}^*}{\arg\min}~
\frac{1}{2}&(\delta_{E,ij}+\delta_{E,ji}+2\rho^{(k-1)})
\|\hat{u}-\hat{q}\|^2_{F^*},
\end{align*}
where
$$\hat{q}= \frac{\delta_{E,ij}
\hat{q}_{ij}+\delta_{E,ji}\hat{q}^*_{ji}+\rho^{(k-1)}(
\hat{x}_{2,ij}^{(k-1)}+(\hat{x}_{2,ji}^{(k-1)})^*)
}{\delta_{E,ij}+\delta_{E,ji}+2\rho^{(k-1)}}.$$ Here, we simply denote $\|\hat{p}\|^2_{F^*}=
|\tilde{p}_{st}|^2+|\tilde{p}_{\mathcal{I}}|^2\varepsilon$, for any $\hat{p}\in \hat{\mathbb{Q}}$.

Then by Lemma \ref{lemma5.3}, subproblem \eqref{qq1} has an explicit solution
\begin{equation}\label{e5.6}
\hat{x}_{1,ij}^{(k)}=\begin{cases}
\hat{1} ,&\text{if}~~i=j,\\
\frac{\tilde{q}_{st}+|\tilde{q}_{st}|\tilde{q}_{\mathcal{I}}\varepsilon}{|\tilde{q}_{st}+|\tilde{q}_{st}|\tilde{q}_{\mathcal{I}}\varepsilon|},&\text{if}~~i\ne j,~\tilde{q}_{st}\ne\tilde{0},\\
c+\tilde{q}_{\mathcal{I}}\varepsilon \in \hat{\mathbb{U}} ,&\text{if}~~i\ne j,~\tilde{q}_{st}=\tilde{0}.
\end{cases}
\end{equation} 

Note that subproblem \eqref{qq2} is the optimal rank-1 approximation of a dual quaternion Hermitian matrix under the $F^*$-norm. It follows from Theorem \ref{3.2} that subproblem \eqref{qq2} has an explicit solution that can be efficiently calculated by Algorithm \ref{rank-1-F*-approximation}.

We will further improve the two-block coordinate descent method by the following aspects.

First, since measurement noise may affect the satisfaction of original stopping criteria in \cite{cui2024power} and parameter choices can strongly affect both accuracy and convergence speed, we adopt two refinements. We apply the proper parameter selection $\rho_1=1$ and introduce a stagnation-based stopping rule
\begin{equation}\label{terminal}
\underset{k-d\le i<j\le k}{\max} |R^{(i)}-R^{(j)}|\le \beta, ~~R^{(i)}=\|\hat{\mathbf{X}}_1^{(i)}-\hat{\mathbf{X}}_2^{(i)}\|_{F^R}.
\end{equation}
With this criterion, the algorithm halts early whenever the objective value plateaus, yielding substantial computational savings without sacrificing solution quality.

\begin{algorithm}
    \caption{A spectral approach to SE(3) synchronization}\label{sass}
    \begin{algorithmic}
    \REQUIRE $\hat{\mathbf Q}$.
    \ENSURE $\hat{\mathbf{X}}$
    \STATE Compute the dominate eigenvector $\hat{\mathbf{y}}$ of $\hat{\mathbf Q}$ by Algorithm \ref{Power_method_algorithm} with tolerance $\delta$.
    \STATE Compute $\hat{\mathbf{z}}=(\hat{z}_i)$ as the projection of $\hat{\mathbf{y}}$ onto the set $\hat{\mathbb Q}^{n\times 1}_2$ by (\ref{def11}).
    \STATE Let $\hat{\mathbf{x}}=[\hat{z}_1,\hat{z}_2,\cdots,\hat{z}_n]^*$.
    \STATE Compute $\hat{\mathbf{X}}=\hat{\mathbf{x}}\hat{\mathbf{x}}^*$.
    \end{algorithmic}
\end{algorithm} 

Second, since random initialization can lead to widely varying iteration counts, we seek a more reliable starting point for the PGO problem. Drawing on the spectral SE(3) synchronization method of \cite{hadi2023mathrm} (see Algorithm \ref{sass}), we can efficiently generate a high-quality starting point $\hat{\mathbf{X}}_2^{(0)}$, as the PGO problem can be viewed as an SE(3) synchronization problem. Although this spectral initialization may not match the final accuracy of Algorithm \ref{PGOP}, its low computational complexity makes it an excellent seed. Moreover, to accelerate the most expensive step—computing the dominant eigenvector of a dual quaternion Hermitian matrix—we replace it with the efficient Algorithm \ref{Power_method_algorithm}.

Combining the above improvement strategies, we obtain an improved two-block coordinate descent method, as described in Algorithm \ref{PGOP}, to solve the PGO problem under the $F^*$-norm.

\begin{algorithm}
    \caption{Improved two-block coordinate descent method}\label{PGOP}
    \begin{algorithmic}
    \REQUIRE $\hat{\mathbf Q}$, $(V,E)$, the
    maximum iteration number $k_{max}$, parameter $\rho^{(k)}=\rho$, $d$, and tolerance $\delta$, $\beta$.
    \ENSURE $\hat{\mathbf{X}}_1^{(k)}$, $\hat{\mathbf{X}}_2^{(k)}$.
    \STATE Generate initial iteration matrix $\hat{\mathbf{X}}_2^{(0)}$ by Algorithm \ref{sass} with tolerance $\delta$.
    \FOR{$k=1,2,\cdots,k_{max}$}
    \STATE Update $\hat{\mathbf{X}}_1^{(k)}$ by (\ref{e42}) or (\ref{e5.6}).
    \STATE Update $\hat{\mathbf{X}}_2^{(k)}$ by  Algorithm \ref{Power_method_algorithm} or Algorithm \ref{rank-1-F*-approximation}.
    \STATE Compute $R^{(k)}=\|\hat{\mathbf{X}}_1^{(k)}-\hat{\mathbf{X}}_2^{(k)}\|_{F^R}$.
    \STATE If $R^{(k)}\le\beta$ or $\underset{k-d\le i<j\le k}{\max} |R^{(i)}-R^{(j)}|\le \beta$, then Stop.
    \ENDFOR
    \end{algorithmic}
\end{algorithm}

We denote the original two‑block coordinate descent method in \cite{cui2024power} by tBCD. By applying proper parameter selection and the stagnation-based termination criteria \eqref{terminal} in tBCD, we obtain RtBCD. Within Algorithm \ref{PGOP}, updating $\hat{\mathbf{X}}_1^{(k)}$ via (\ref{e42}) and updating $\hat{\mathbf{X}}_2^{(k)}$ via Algorithm \ref{Power_method_algorithm} yields the DBtBCD variant, whereas 
updating $\hat{\mathbf{X}}_1^{(k)}$ via (\ref{e5.6}) and updating $\hat{\mathbf{X}}_2^{(k)}$ via Algorithm \ref{rank-1-F*-approximation} produces the FtBCD variant.

\section{Experimental Results}\label{section6}
In this section, we report numerical experiments on the PGO problem by Algorithm \ref{PGOP} developed above. All numerical experiments are conducted in MATLAB (2022a) on a laptop of 8G of memory and Inter Core i5 2.3Ghz CPU. 

For the PGO problem with $m$ directed edges and $n$ vertices in $E$, the observation rate is defined as $s=\frac{m}{n(n-1)}$. Let $\hat{\mathbf Q}_0$ denote the true value. To generate the observed data, we first  sample Gaussian noise $\hat{\mathbf N}$ and define the relative noise level $l_{\text{noise}}=\frac{\|P_E(\hat{\mathbf{N}})\|_{F^R}}
{\|P_E(\hat{\mathbf{Q}}_0)\|_{F^R}}$. We then form the observed data $\hat{\mathbf Q}=P_E(P_{\hat{\mathrm{U}}^{n\times n}}(\hat{\mathbf{Q}}_0+\hat{\mathbf{N}}))$. Next, set $l^*=\frac{\|\hat{\mathbf{Q}}-P_E(\hat{\mathbf{Q}}_0)\|_{F^R}}
{l_{\text{noise}}\|P_E(\hat{\mathbf{Q}}_0)\|_{F^R}}$. In practice, one observes $l^*\approx 1.2$. To quantify reconstruction quality, the relative error $e_Q$ is defined as $e_{Q}=\frac{\|\hat{\mathbf{Q}}_0-\hat{\mathbf{X}}_2^{(k)}\|_{F^R}}
{\|\hat{\mathbf{Q}}_0\|_{F^R}}$. $n_{iter}$ refers to the average number of iterations required by the algorithm, while $'\text{time (s)}'$ denotes the average elapsed CPU time consumed by the algorithm.

\subsection{Parameter selection and termination condition}
First, we assess how our parameter selection and termination criteria accelerate convergence. Figure \ref{fig1} compares tBCD and RtBCD on the PGO problem with $n=100$ in two settings: (i) noiseless data at varying observation rates, and (ii) fixed observation rate $s=0.10$ with different noise levels. Table \ref{table4} summarizes the final error $e_Q$, number of iterations, and elapsed CPU time for each test. For tBCD, the parameters are set as $k_{max}=200$, $\rho^{(0)}=0.01$, $\rho_1=1.1$, $\beta=10^{-6}$. For RtBCD, the parameters are set as $k_{max}=200$, $\rho^{(0)}=0.01$, $\rho_1=1.0$, $d=1$, $\beta=10^{-6}$. 


\begin{figure}[htbp]
     \centering
    \subfigure[PGO problem for dimension $n=100$]{          
         \begin{minipage}[c]{.40\linewidth}
             \centering
             \includegraphics[width=1.0\textwidth]{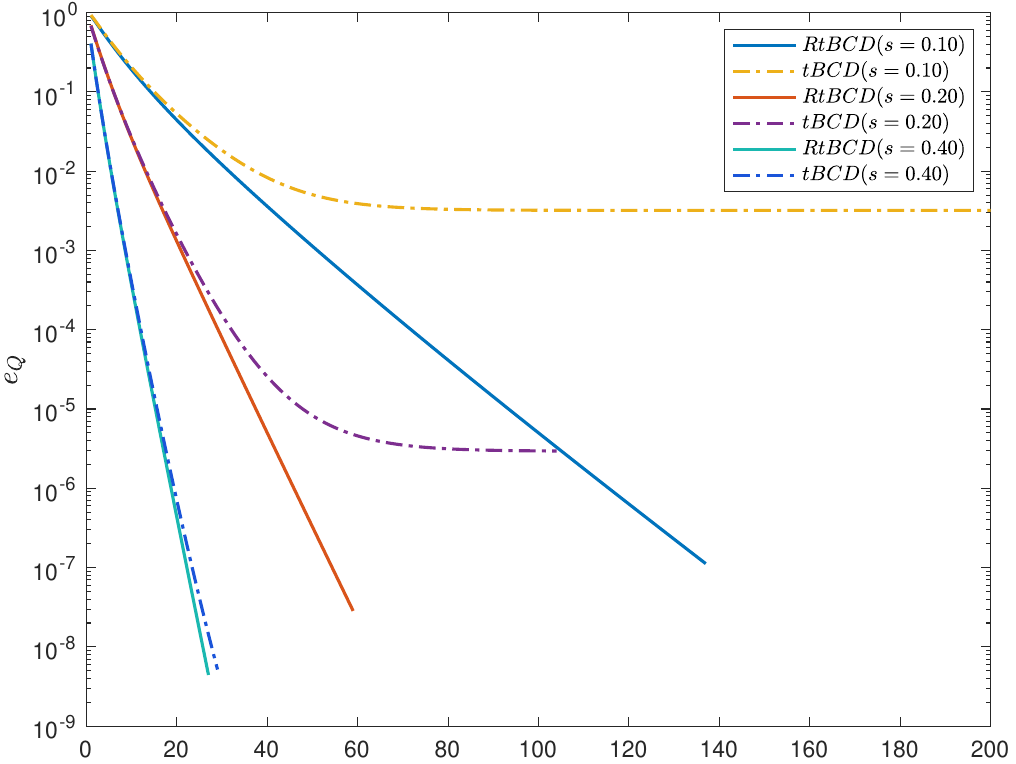}
         \end{minipage}
     }
     \quad
    \subfigure[PGO problem for dimension $n=100$ and observation rate $s=10\%$]{         
        \begin{minipage}[c]{.40\linewidth}
            \centering
            \includegraphics[width=1.0\textwidth]{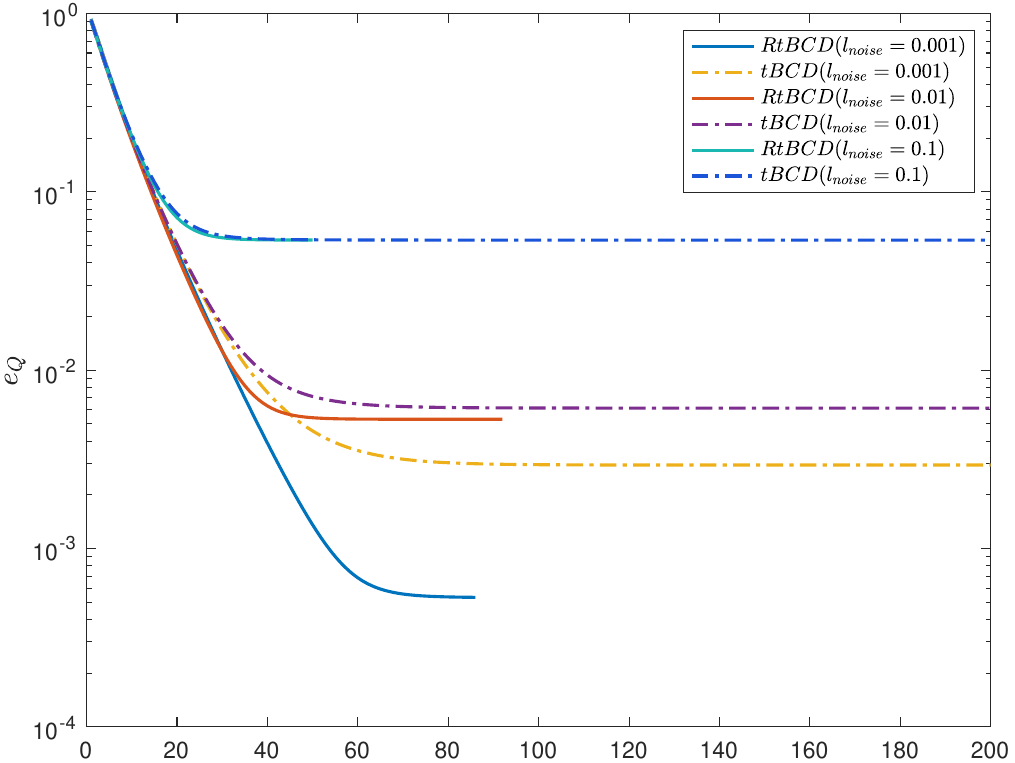}
        \end{minipage}
     }
     \caption{The impact of the parameter selection and the termination condition }
\label{fig1}
 \end{figure}

\begin{table}[htbp]
  \centering
  \caption{The impact of the parameter selection and the termination condition}\label{t1}
  \begin{tabular}{cccccccc}
    \hline
     \multicolumn{2}{c}{Algorithm}&\multicolumn{3}{c}{tBCD}& \multicolumn{3}{c}{RtBCD}\\
    $l_{\text{noise}}$ & $s$  & $e_{Q}$ & $n_{iter}$ & $time (s)$  & $e_{Q}$ & $n_{iter}$ & $time (s)$ \\
    \hline
    0 & 10$\%$  & 3.20e-3 & 200 & 2.87 & 1.12e-7 & 137 & 1.59 \\ 
    0 & 20$\%$  & 2.95e-6 & 104 & 9.26e-1 & 2.84e-8 & 59 & 4.80e-1 \\
    0 & 40$\%$  & 5.17e-9 & 29 & 2.34e-1 & 4.42e-9 & 27 & 2.35e-1 \\ 
    0.1 & 10$\%$  & 5.36e-2 & 200 & 2.33 & 5.37e-2 & 50 & 6.74e-1\\ 
    0.01 & 10$\%$ & 6.13e-3 & 200 & 2.09 & 5.31e-3 & 92 & 1.28\\
    0.001 & 10$\%$ & 2.94e-3 & 200 & 1.74 & 5.33e-4 & 86 & 1.26 \\ 
    \hline
  \end{tabular}
  \label{table4}
\end{table}

As shown in Figure \ref{fig1}, RtBCD exhibits near‑linear convergence in the noiseless case. As the observation rate decreases, the accuracy of tBCD degrades sharply, while RtBCD continues to achieve a steady, linear reduction in error. Under noisy conditions, RtBCD also attains lower final errors. Thanks to the newly added stagnation‐based stopping rule, RtBCD terminates early once the objective value ceases to change appreciably—again evident in Figure \ref{fig1}. As Table \ref{t1} demonstrates, RtBCD delivers higher accuracy, requires fewer iterations, and incurs shorter computation times compared to tBCD. These results validate our choice of parameter selection and the stagnation-based termination criterion, which not only accelerate convergence but also enhance solution quality.

Consequently, in all subsequent experiments we use RtBCD as the baseline for comparison rather than the original tBCD method. 

\subsection{Performance of improved two-block coordinate descent method}
In this subsection, we compare RtBCD, DBtBCD, and FtBCD on the PGO problem under varying noise levels and observation rates. Tables \ref{table5} and \ref{table6} report results for noise levels $l_{\text{noise}}\in \{0,0.1,0.05,0.01\}$ at $s=0.40$, $n=10$ and $s=0.10$, $n=100$, respectively. Table \ref{table7} shows performance at observation rates
$s\in\{0.3,0.4,0.5,0.6\}$ with $n=10$ and $l_{\text{noise}}=0.05$, and Table \ref{table8} shows performance at observation rates $s\in\{0.05,0.075,0.10,0.20\}$ with $n=100$ and $l_{\text{noise}}=0.05$. For RtBCD, the parameters are set as $k_{max}=500$, $\rho^{(0)}=0.01$, $\rho_1=1.0$, $d=2$, $\beta=10^{-4}$. For DBtBCD and FtBCD, the parameters are set as $k_{max}=500$, $\rho=0.01$, $d=2$, $\beta=10^{-4}$, $\delta=10^{-1}$. All experiments were repeated 100 times and the results were averaged.


\begin{table}[ht]
  \centering
  \caption{The numerical results of solving the PGO problem using algorithms RtBCD, DBtBCD, and FtBCD under different noise levels for given observation rate $s=0.40$ and dimension $n=10$}
  \resizebox{1.0\columnwidth}{!}{
  \begin{tabular}{ccccccccc}
    \hline
    Algorithm & $l_{\text{noise}}$  & $e_{Q}$ & $n_{iter}$ & $time (s)$ & $l_{\text{noise}}$ & $e_{Q}$ & $n_{iter}$ & $time (s)$ \\
    \hline
    \multirow{2}{*}{RtBCD}& 0.1 & 7.66e-2 & 24.49 & 1.40e-1 & 0.05 & 4.16e-2 & 24.40 & 1.60e-1 \\ 
    & 0.01 & 7.71e-3 & 25.70 & 1.44e-1  & 0 &  1.37e-5 & 36.38 & 1.75e-1  \\
    \multirow{2}{*}{DBtBCD} & 0.1 &  7.67e-2 & 16.89 & 8.58e-3 & 0.05 & 4.15e-2 & 14.81 & 7.07e-3 \\ 
    & 0.01 &  7.75e-3 & 11.21 & 2.37e-3   & 0 & 8.45e-6 & 15.68 & 2.94e-3 \\
    \multirow{2}{*}{FtBCD} & 0.1 &  7.23e-2 & 16.30 & 9.10e-3 & 0.05  & 3.91e-2 & 13.79 & 6.68e-3  \\ 
    & 0.01  & 7.30e-3 & 11.06 & 2.41e-3   & 0 & 9.37e-6 & 16.35 & 3.33e-3 \\
    \hline
  \end{tabular}}
  \label{table5}
\end{table}


\begin{table}[htbp]
  \centering
  \caption{The numerical results of solving the PGO problem using algorithms RtBCD, DBtBCD, and FtBCD under different noise levels for given $s=0.10$ and $n=100$}
  \resizebox{1.0\columnwidth}{!}{
  \begin{tabular}{ccccccccc}
    \hline
    Algorithm & $l_{\text{noise}}$  & $e_{Q}$ & $n_{iter}$ & $time (s)$ & $l_{\text{noise}}$ & $e_{Q}$ & $n_{iter}$ & $time (s)$ \\
    \hline
    \multirow{2}{*}{RtBCD}& 0.1 & 5.11e-2 & 59.37 & 7.22e-1 & 0.05  & 2.58e-2 & 64.58 & 7.64e-1 \\ 
    & 0.01  & 4.97e-3 & 71.17 & 6.14e-1 & 0 & 9.16e-6 & 116.39 & 1.11\\
    \multirow{2}{*}{DBtBCD} & 0.1 &  5.08e-2 & 34.67 & 1.74e-1 & 0.05 & 2.57e-2 & 32.17 & 1.28e-1 \\ 
    & 0.01 & 4.96e-3 & 24.63 & 7.56e-2  & 0 & 7.40e-6 & 41.82 & 1.45e-1 \\
    \multirow{2}{*}{FtBCD} & 0.1  & 4.68e-2 & 35.09 & 1.77e-1 & 0.05  & 2.36e-2 & 32.81 & 1.40e-1 \\ 
    & 0.01  & 4.56e-3 & 25.18 & 7.83e-2 & 0 &  7.68e-6 & 42.38 & 1.57e-1 \\
    \hline
  \end{tabular}}
  \label{table6}
\end{table}

It is worth noting that the algorithms are not always valid for solving PGO problems. We deem a run unsuccessful if the final error $e_{Q}$ exceeds $2\times l_{\text{noise}}$ (or $10^{-4}$ in the noise‑free case). In our 100-trial experiments at $s=0.40$ and $n=10$, RtBCD failed 4, 4, 2 and 2 times for noise level $l_{\text{noise}}=0.1,0.05,0.01,0$, respectively (Table \ref{table5}). It also failed 12, 3 times at $n=10$, $s=30\%$, and $40\%$, respectively (Table \ref{table7}), and 32, 16 and 1 times at $n=100$,  $s=5\%$, $7.5\%$ and  $10\%$,  respectively (Table \ref{table8}). In contrast, both DBtBCD and FtBCD succeeded in every trial, demonstrating a markedly higher success rate.

Across all scenarios with noise (Tables \ref{table5}–\ref{table8}), FtBCD consistently achieve lower error $e_{Q}$ than RtBCD and DBtBCD. Under noisy conditions, FtBCD reduces the error by $4.5\%\sim 8.5\%$ relative to RtBCD and DBtBCD. This demonstrates that employing the $F^*$‑norm formulation can indeed effectively enhance the accuracy of the solution. For $n=10$, DBtBCD and FtBCD require only $1.5\%\sim 6.5\%$ of the CPU time of RtBCD; while for $n=100$, it is $12\%\sim 30\%$. This highlights the superior efficiency of our algorithms. In particular, both DBtBCD and FtBCD converge in far fewer iterations than RtBCD, validating the effectiveness of our spectral initialization strategy.


\begin{table}[htpb]
  \centering
  \caption{The numerical results of solving the PGO problem using algorithms RtBCD, DBtBCD, and FtBCD at different observation rates for given $n=10$ and $l_{\text{noise}}=0.05$}
  \begin{tabular}{ccccccccc}
    \hline
    Algorithm & $s$  & $e_{Q}$ & $n_{iter}$ & $time (s)$ & $s$ & $e_{Q}$ & $n_{iter}$ & $time (s)$ \\
    \hline
    \multirow{2}{*}{RtBCD}& 30$\%$ & 4.54e-2 & 37.34 & 2.77e-1 & 40$\%$ & 3.94e-2 & 25.40 & 2.44e-1 \\ 
    & 50$\%$ & 3.48e-2 & 18.14 & 1.10e-1 & 60$\%$ & 3.10e-2 & 14.31 & 1.08e-1 \\
    \multirow{2}{*}{DBtBCD} & 30$\%$ & 4.59e-2 & 24.41 & 6.62e-3 &  40$\%$ & 3.93e-2 & 13.84 & 6.83e-3 \\ 
    & 50$\%$ & 3.47e-2 & 10.38 & 6.74e-3 & 60$\%$ & 3.10e-2 & 7.56 & 7.01e-3 \\
    \multirow{2}{*}{FtBCD} & 30$\%$ & 4.34e-2 & 24.67 & 7.22e-3 &  40$\%$ & 3.72e-2 & 13.86 & 7.16e-3 \\ 
    & 50$\%$ & 3.26e-2 & 9.67 & 6.63e-3 & 60$\%$ & 2.96e-2 & 7.34 & 6.88e-3 \\
    \hline
  \end{tabular}
  \label{table7}
\end{table}


\begin{table}[ht]
  \centering
  \caption{The numerical results of solving the PGO problem using algorithms RtBCD, DBtBCD, and FtBCD at different observation rates for given $n=100$ and $l_{\text{noise}}=0.05$}
  \resizebox{1.0\columnwidth}{!}{
  \begin{tabular}{ccccccccc}
    \hline
    Algorithm & $s$  & $e_{Q}$ & $n_{iter}$ & $time (s)$ & $s$ & $e_{Q}$ & $n_{iter}$ & $time (s)$ \\
    \hline
    \multirow{2}{*}{RtBCD}& 5$\%$ & 3.80e-2 & 179.40 & 1.98 & 7.5 $\%$ & 3.00e-2 & 94.14 & 1.21 \\
    & 10$\%$ & 2.53e-2 & 63.45 & 5.65e-1 & 20$\%$ & 1.72e-2 & 28.22 & 2.20e-1 \\ 
    \multirow{2}{*}{DBtBCD} &  5$\%$ & 3.80e-2 & 118.73 & 3.65e-1 & 7.5$\%$  & 3.00e-2 & 45.75 & 1.42e-1 \\
    & 10$\%$ & 2.53e-2 & 30.91 & 1.01e-1 & 20$\%$ & 1.71e-2 & 14.34 & 6.51e-2 \\ 
    \multirow{2}{*}{FtBCD}& 5$\%$  & 3.50e-2 & 118.86 & 3.82e-1 & 7.5$\%$ & 2.75e-2 & 46.54 & 1.51e-1 \\
    & 10$\%$ & 2.32e-2 & 31.56 & 1.02e-1 &  20$\%$ & 1.59e-2 & 14.43 & 6.39e-2 \\ 
    \hline
  \end{tabular}}
  \label{table8}
\end{table}

In general, our numerical experiments confirmed that the proper parameter selection and the stagnation-based stopping rule significantly increase both the accuracy and efficiency of the two-block coordinate descent method. The spectral initialization strategy further effectively reduces iterations and using the $F^*$-norm yields effective improvement in precision. Our algorithms exhibit superior accuracy, faster computation, and higher success rates, particularly at low observation rates. 

\section{Conclusions}\label{section7}
In this paper, we derive explicit solutions for the optimal rank‑k approximation of dual quaternion Hermitian matrices under both the $F$-norm and the $F^*$-norm. We then solve the PGO problem under the more appropriate and robust $F^*$-norm via the improved two-block coordinate descent method, leading to superior experimental accuracy. Our improved algorithm incorporates proper parameter selection, stagnation-based stopping criterion, and an efficient spectral initialization strategy. Extensive numerical experiments demonstrate that our approach achieves higher precision, faster computation, and higher success rates, especially in low-observation settings. 


\end{document}